\DeclareMathAlphabet{\mathpzc}{OT1}{pzc}{m}{it}
\DeclareFontFamily{U}{matha}{\hyphenchar\font45}
\DeclareFontShape{U}{matha}{m}{n}{
      <5> <6> <7> <8> <9> <10> gen * matha
      <10.95> matha10 <12> <14.4> <17.28> <20.74> <24.88> matha12
      }{}
\DeclareSymbolFont{matha}{U}{matha}{m}{n}
\DeclareMathSymbol{\pm}            {2}{matha}{"08}
\DeclareMathSymbol{\mp}            {2}{matha}{"09}
\DeclareMathSymbol{\varleftarrow}{3}{matha}{"D0}
\DeclareMathSymbol{\varrightarrow}{3}{matha}{"D1}
\DeclareMathSymbol{\vee}           {2}{matha}{"5F}
\DeclareMathSymbol{\wedge}         {2}{matha}{"5E}
\DeclareMathSymbol{\leq}         {3}{matha}{"A4}
\DeclareMathSymbol{\geq}         {3}{matha}{"A5}
\DeclareMathSymbol{\in}            {3}{matha}{"50}
\DeclareMathSymbol{\owns}          {3}{matha}{"51}
\DeclareRobustCommand{\Lcorner}{\mathbin{\mspace{1mu}\text{\L@corner}\mspace{1mu}}}
\newcommand{\L@corner}{%
  \setlength{\unitlength}{\fontcharht\font`T}%
  \begin{picture}(0.8,0)
  \roundcap
  \Line(0.1,0.95)(0.1,0.05)
  \Line(0.1,0.05)(0.7,0.05)
  \end{picture}%
}
\DeclareRobustCommand{\Tri}{\mathbin{\mspace{1mu}\text{\L@corneer}\mspace{1mu}}}
\newcommand{\L@corneer}{%
  \setlength{\unitlength}{\fontcharht\font`T}%
  \begin{picture}(0.8,0)
  \roundcap
%  \Line(0.1,0.95)(0.1,0.05)
  \Line(0.1,0.052)(1.0,0.052)
  \end{picture}%
}
\DeclareRobustCommand{\Irt}{\mathbin{\mspace{1mu}\text{\L@corneeer}\mspace{1mu}}}
\newcommand{\L@corneeer}{%
  \setlength{\unitlength}{\fontcharht\font`T}%
  \begin{picture}(0.8,0)
  \roundcap
%  \Line(0.1,0.95)(0.1,0.05)
  \Line(0.1,0.82)(1.0,0.82)
  \end{picture}%
}
\newcommand{\Loc}{{\scriptscriptstyle\mkern 1mu\Lcorner}}
\newcommand{\LLoc}{{\scriptscriptstyle\mkern 1mu\mathbb{L}}}
\newcommand{\loc}{{\scriptscriptstyle\mkern 1mu\ell}}
\newcommand{\tri}{{\wedge\scriptscriptstyle\mkern -13.9mu\Tri}}
\newcommand{\irt}{{\!\vee\scriptscriptstyle\mkern -13.9mu\Irt}}
\newcommand{\glob}{\text{\scalebox{1.2005}{\rotatebox[origin=c]{180}{$\loc$}}}}
\newcommand{\thickhline}{%
    \noalign {\ifnum 0=`}\fi \hrule height 1pt
    \futurelet \reserved@a \@xhline
}
\newcolumntype{'}{@{\hskip\tabcolsep\vrule width 1pt\hskip\tabcolsep}}
\newcolumntype{"}{@{\hskip\tabcolsep\vrule width 1.5pt\hskip\tabcolsep}}
\newcommand{\scr}{\mathscr}
\newcommand{\scrG}{\mathscr{G}}
\newcommand{\scrB}{\mathscr{B}}
\newcommand{\mangd}{M}
\newcommand{\crt}{{\!\vee\!}}
\def\boxit#1{\vbox{\hrule\hbox{\vrule\kern3pt
             \vbox{\kern3pt#1\kern3pt}\kern3pt\vrule}\hrule}}
\newcommand{\beq}{\begin{equation}}
\newcommand{\beqn}{\begin{equation*}}
\newcommand{\eeq}{\end{equation}}
\newcommand{\eeqn}{\end{equation*}}
\newcommand{\beqa}{\begin{eqnarray}}
\newcommand{\beqan}{\begin{eqnarray*}}
\newcommand{\eeqa}{\end{eqnarray}}
\newcommand{\eeqan}{\end{eqnarray*}}
\newcommand{\bdm}{\begin{displaymath}}
\newcommand{\edm}{\end{displaymath}}
\newcommand{\ba}{\begin{array}}
\newcommand{\ea}{\end{array}}
\newcommand\nn{\nonumber}
\newcommand\benu{\begin{enumerate}}
\newcommand\eenu{\end{enumerate}}
\newcommand\bit{\begin{itemize}}
\newcommand\eit{\end{itemize}}
\theoremstyle{plain}
\newtheorem{theorem}{Theorem}[section]
\newtheorem{lemma}[theorem]{Lemma}
\newtheorem{cor}[theorem]{Corollary}
\newtheorem{prop}[theorem]{Proposition}
\theoremstyle{definition}
\newtheorem{defi}[theorem]{Definition}
\newtheorem{ex}[theorem]{Example}
\newtheorem{rmk}[theorem]{Remark}
\def\der'{\mathfrak{der}'\,}
\def\der{\mathfrak{der}\,}
\def\str'{\mathfrak{str}'\,}
\def\str{\mathfrak{str}\,}
\def\sl{\mathfrak{sl}}
\def\gl{\mathfrak{gl}}
\def\qed{\hspace{\stretch{1}} $\Box$
\noindent}
\newcommand{\de}{\delta}
\newcommand{\dlb}{\llbracket}%{\ensuremath{[\![}}
\newcommand{\drb}{\rrbracket}%{\ensuremath{]\!]}}
\newcommand{\blb}%{\ensuremath
{\text{$\llbracket$\hspace{-4pt}\scalebox{0.99}{$|$}\hspace{-2.58pt}\scalebox{0.99}{$|$}\hspace{-2.58pt}\scalebox{0.99}{$|$}}}
\newcommand{\brb}%{\ensuremath{
\newcommand{\ad}{\mathrm{ad}\,}
\def\fg{{\mathfrak g}}
\def\sh{\sharp}
\def\*{\partial}
\numberwithin{equation}{section}
\begin{document}

\frenchspacing

\titleformat{\section}
{\normalfont\bfseries\large\sffamily}{\textsf{{\thesection}}}{1em}{}
\titlelabel{\textsf{{\thetitle}}\quad}

\titleformat{\subsection}
{\normalfont\bfseries\sffamily}{\textsf{{\thesubsection}}}{1em}{}
\titlelabel{\textsf{{\thetitle}}\quad}

\titleformat{\subsubsection}
{\normalfont\bfseries\sffamily}{\textsf{{\thesubsubsection}}}{1em}{}
\titlelabel{\textsf{{\thetitle}}\quad}

\includegraphics[height=2cm]{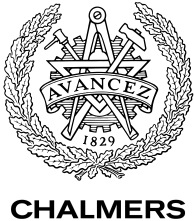}
\hspace{2mm}
\includegraphics[height=1.85cm]{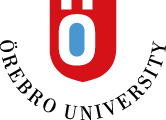}

\vskip-10pt
\hfill {\tt \today} \\
\vskip-10pt
%\hfill {\tt \jobname} \\

\vspace*{1cm}

\vspace*{1.5cm}

\begin{center}
\noindent
{\LARGE {\sf \textbf{Cartanification of contragredient Lie superalgebras}}}\\
\vspace{.3cm}

\renewcommand{\thefootnote}{\fnsymbol{footnote}}

\vskip 1truecm

\noindent
{\large {\sf \textbf{Martin Cederwall${}^{\,\mathsf 1}$ and Jakob Palmkvist${}^{\,\mathsf 2}$}
}}\\
\vskip .5truecm
        ${}^{\mathsf 1\,}${\it{Department of Physics\\ Chalmers University of Technology\\ SE-412 96 Göteborg, Sweden}\\[3mm]}
        {\tt martin.cederwall@chalmers.se}\\
\noindent
\vskip .5truecm
        ${}^{\mathsf 2\,}${\it{School of Science and Technology\\\"Orebro University\\ SE-701 82 \"Orebro, Sweden}\\[3mm]}
        {\tt jakob.palmkvist@oru.se} \\
\end{center}

\vskip 1cm

\centerline{\sf \textbf{
Abstract}}
\vskip .2cm

Let $\scrB$ be a $\mathbb{Z}$-graded Lie superalgebra equipped with an invariant $\mathbb{Z}_2$-symmetric homogeneous bilinear form
and containing a grading element. Its local part (in the terminology of Kac) $\scrB_{-1} \oplus \scrB_{0}\oplus\scrB_{1}$ 
gives rise to another $\mathbb{Z}$-graded Lie superalgebra,
recently constructed in \cite{Cederwall:2022oyb}, that we here denote
$\scrB^W$ and call the cartanification of $\scr B^W$, since it is of Cartan type in the cases where it happens to finite-dimensional.
In cases where $\scr B$ is given by a generalised Cartan matrix, we compare $\scrB^W$ to the tensor hierarchy algebra $W$
constructed from the same generalised Cartan matrix by a modification of the generators and relations. 
We generalise this construction
and give conditions under which $W$ and $\scrB^W$ are isomorphic, proving a conjecture in \cite{Cederwall:2022oyb}. 
We expect
that the algebras with restricted associativity underlying the cartanifications
will be useful in applications of tensor hierarchy algebras to the field of extended geometry in physics.

\noindent

\newpage

\pagestyle{plain}

\tableofcontents

\section{Introduction} \label{intro}

Since they were first introduced a decade ago, \cite{Palmkvist:2013vya}
tensor hierarchy algebras have appeared in various guises
and applications to physics, describing
structures known as tensor hierarchies in 
gauged supergravity theories and extended geometry
\cite{deWit:2008ta,Greitz:2013pua,Bossard:2017wxl,Cagnacci:2018buk,Bossard:2019ksx,Cederwall:2019bai,Bonezzi:2019bek,Bossard:2021ebg,Borsten:2021ljb,
Cederwall:2021xqi,Cederwall:2023xbj}.
However, there is still a lack of a sufficiently simple and comprehensive definition.

Any tensor hierarchy algebra is 
characterised by a pair $(\fg,\lambda)$,
where $\fg$ is a Kac--Moody algebra and $\lambda$ is a dominant integral weight of $\fg$.
It is a Lie superalgebra with a
consistent $\mathbb{Z}$-grading such that the subalgebra at degree $0$ includes the derived algebra $\fg'$ of $\fg$,
and the $\fg'$-module at degree $1$ includes the lowest weight $\fg'$-module
with lowest weight $-\lambda$, denoted by $R(-\lambda)$.
For such pairs $(\fg,\lambda)$, two different types of tensor hierarchy algebras have been defined,
$S=S(\fg,\lambda)$ and $W=W(\fg,\lambda)$.
For $\fg=A_{n-1}=\mathfrak{sl}(n)$ and $\lambda=\Lambda_1$
(the fundamental weight associated to one of the outermost nodes in the Dynkin diagram),
they are both finite-dimensional and, being also simple,
appear in the
classification of such
Lie superalgebras
as $S(n)$ and $W(n)$, which are Lie superalgebras of \textit{Cartan type} \cite{Kac77B}.
Algebras are here considered over some field $\mathbb{K}$ which is algebraically closed and of characteristic zero.

To the pair $(\fg,\lambda)$ we can also associate a contragredient Lie superalgebra $\scrB$,
related to the tensor hierarchy algebras $S$ and $W$ (and a forerunner to the latter in some applications \cite{Palmkvist:2015dea,Cederwall:2018aab}, based on ideas in \cite{Cremmer:1998px,Henry-Labordere:2002xau,Henneaux:2010ys}).
It is a Borcherds--Kac--Moody algebra with a consistent $\mathbb{Z}$-grading $\scrB=\bigoplus_{k\in\mathbb{Z}}\scrB_k$,
where the subalgebra $\scrB_0$ contains $\fg'$ along with a one-dimensional subalgebra,
and $\scrB_1=R(-\lambda)$. Furthermore, $\scrB_2=\big(R(-\lambda)\vee R(-\lambda)\big)\ominus R(-2\lambda)$,
where $\vee$ denotes symmetrised tensor product.
Since $\scrB$ is contragredient (unlike the tensor hierarchy algebras $S$ and $W$),
the $\fg$-modules $\scrB_k$ and $\scrB_{-k}$ are dual for any $k\in\mathbb{Z}$.

Currently, there are essentially three definitions or constructions of
the tensor hierarchy algebras 
$S$ (and corresponding ones for $W$):

\begin{enumerate}
\item[(i)] 
{\it The original $\fg$-covariant construction for finite-dimensional $\fg$
{\rm\cite{Palmkvist:2013vya}}.} Here, always 
$S_1=R(-\lambda)=\scr B_1$ and $S_0=\fg\subset \scr B_0$. Then $S_{-1}$ is the maximal 
nontrivial subspace of ${\rm Hom}(S_1,S_0)$ 
such that for any $\Theta \in S_{-1}$, the kernel of the map
$S_1 \vee S_1 \to S_1$ that $\Theta$
induces by $\Theta(x \vee y)=[\Theta(x),y]-[x,\Theta(y)]$
includes the $\fg$-module $R(-2\lambda)$. 
This construction is obviously limited to cases where there is such a nontrivial subspace of ${\rm Hom}(S_1,S_0)$.
It is related to constructions of similar structures in \cite{Lavau:2017tvi,Lavau:2019oja,Lavau:2020pwa,LavauPalmkvist}.

\item[(ii)]
{\it The $\gl(r)$-covariant construction for $(\fg,\lambda)=(E_r,\Lambda_1)$ {\rm\cite{Bossard:2017wxl,Bossard:2019ksx,Bossard:2021ebg}}.}
Here (and anywhere below where $E_r$ is considered), $\Lambda_1$ is the fundamental weight associated to node $1$ in Figure \ref{ediagram}
(otherwise, we use the Bourbaki numbering of nodes.)
This construction is based on a
non-consistent $\mathbb{Z}$-grading, associated to node $r$,
with the Cartan-type Lie superalgebra
$W(r)$ at degree $0$. When $\fg$ is infinite-dimensional, $r\geq9$, 
extra modules
appear as quotients besides
$\fg'$ and $R(-\lambda)$ in $S_0$ and $S_1$.
\item[(iii)]
{\it The construction by generators and relations {\rm\cite{Carbone:2018xqq,Cederwall:2019qnw,Cederwall:2021ymp}}.} This
is a modification of the Chevalley--Serre presentation of $\scr B$ from
its Cartan matrix or Dynkin diagram. Also here, the structure of extra module appears.
This definition is very general, but for infinite-dimensional $\fg$ it is often difficult to determine whether the 
relations generate a proper ideal of the free algebra on the generators.
The explicit form of all Serre relations is not known.
\end{enumerate}

\begin{figure}
\begin{center}
\begin{picture}(220,60)
(110,-5)
\put(113,-10){${\scriptstyle{1}}$}
\put(153,-10){${\scriptstyle{2}}$}
\put(202,-10){${\scriptstyle{r-4}}$}
\put(242,-10){${\scriptstyle{r-3}}$}
\put(282,-10){${\scriptstyle{r-2}}$}
\put(322,-10){${\scriptstyle{r-1}}$}
\put(260,45){${\scriptstyle{r}}$}
\thicklines
\multiput(210,10)(40,0){4}{\circle{10}}
\multiput(215,10)(40,0){3}{\line(1,0){30}}
\put(155,10){\circle{10}}
\put(115,10){\circle{10}}
\put(120,10){\line(1,0){30}}
\multiput(160,10)(35,0){2}{\line(1,0){10}}
\multiput(175,10)(10,0){2}{\line(1,0){5}}
\put(250,50){\circle{10}} \put(250,15){\line(0,1){30}}
\end{picture}\\
\vspace{0.5cm}
\footnotesize{\caption{\textit{Dynkin diagram of $E_r$ with our labelling of nodes.}\label{ediagram}}}
\end{center}
\end{figure}
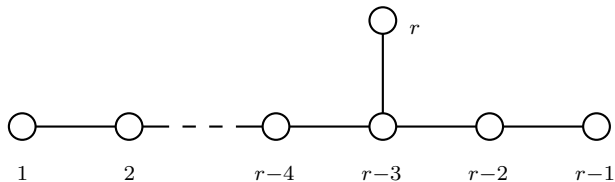

In addition, tensor hierarchy algebras constructed by generators and relations
were in \cite{Cederwall:2022oyb} shown to be homomorphic to Lie superalgebras constructed in a fourth way
under certain conditions, including that $\fg$ be finite-dimensional.
In the present paper, we call this fourth construction (or the result of it)
{\it cartanification}, since it gives the Lie superalgebra of Cartan type $W(n)$
when it is applied to the basic Lie superalgebra $A(0,n-1)=\mathfrak{sl}(n|1)$.
In its present form, and in the case of infinite-dimensional $\fg$, it is unable to reproduce the extra modules 
that play a crucial role in the applications to extended geometry.
Nevertheless, we believe that we can learn much from a closer study of it.
The initial data in the cartanification consists not only of $\fg$ and $\lambda$, but also of a choice $\kappa$
of an invariant symmetric bilinear form on $\fg$.
The Cartan matrix in the construction (iii) also depends on $\kappa$, but the resulting algebra $S$ or $W$ do not.

The main aim of the present paper is to compare the definition (iii) of $W$ to the cartanification,
and to investigate to what extent the two constructions agree.
More particularly, we will investigate
under what conditions the homomorphism in \cite{Cederwall:2022oyb} is an isomorphism.
Our main result, Theorem~\ref{maintheorem},
says that this is indeed the case when:
\begin{itemize}
\item $\fg$ is simple and finite-dimensional,
\item $\lambda$ is a fundamental weight $\Lambda_i$ such that the corresponding Coxeter label is equal to one (a 
\textit{pseudo-minuscule} weight), and
\item $\kappa$ is given by $\kappa(e_i,f_i)=1$ for the generators $e_i,f_i$ corresponding to $\Lambda_i$.
\end{itemize}
However, we start with a much more general setting (where in particular $\fg$ is not necessarily simple, but semisimple)
in order to investigate to what extent structures in the tensor hierarchy algebras found previously under certain conditions  
persist also when these conditions are relaxed. The 
results are important not only in comparison with the cartanification but also in their own right.  

A central concept in the paper is that of a \textit{local algebra} in the terminology of Kac
(which has nothing to do with the concept of local rings
in ring theory). Local Lie algebras were introduced by Kac in \cite{Kac68} as the structure that remains 
when a $\mathbb{Z}$-graded Lie algebra is restricted to its \textit{local part},
which consists of the subspaces at degrees $0$ and $\pm1$.
This means that the bracket is defined, respects the $\mathbb{Z}$-grading and satisfies the Jacobi identity as long as it takes
values at these degrees, but it is not defined for pairs of elements both at degree $\pm1$
(and thus a local Lie algebra is in fact not an algebra). 
The concept was generalised to Lie superalgebras in
\cite{Kac77B}, and can obviously be generalised to any type of algebras.
It seems to not have been used much since it was introduced, but we have found it very useful to work at
the level of local algebras as long as possible and then take the step to ``global'' extensions.
Any local Lie superalgebra
can be extended uniquely (up to isomorphism) to a
$\mathbb{Z}$-graded Lie superalgebra 
such that the subalgebras at positive and negative degrees are freely generated by 
the subspaces at degree $1$ and $-1$, respectively.
This is the \textit{maximal} $\mathbb{Z}$-graded Lie superalgebra with the given local part,
and factoring out its maximal graded ideal intersecting the local part trivially
yields the \textit{minimal} one \cite{Kac68,Kac77B}.

The paper is organised as follows and outlined in Figure~\ref{organisation}. In Section \ref{sec2} we review how $\scrB$ and $W$ 
(in Sections~\ref{Contragredient
Lie superalgebras} and \ref{tha-subsection}, respectively)
are defined from the same data $(\fg,\lambda,\kappa)$
and generalise the construction (iii) above. The structure of the subspace $W_{-1}$ at degree $-1$ is studied in
Section~\ref{tha-structure}.
In Section~\ref{cartanification-section} we shift focus and define the concepts of contragredient local
Lie superalgebras and of cartanification. This is done without reference to the contents of the preceding sections,
so Sections~\ref{Local algebras} and \ref{cartansubsec} can be read independently from them. The construction is an
improved version of the one in \cite{Cederwall:2022oyb}, which is simpler (without being less general or less rigorous).
The two parts are then tied together
in Section~\ref{sec3.3} where we focus on the case where the contragredient local
Lie superalgebra is given by the local part of $\scrB$, and even more in Section~\ref{isomorphism-section},
where we give the homomorphism which we show is an isomorphism under the assumptions above.
We conclude with some open questions and directions for further work in Section~\ref{sec5}.
\\\\
\noindent
\underline{\it Acknowledgments:} JP would like to thank Jens Fjelstad, Jonas Hartwig and Dimitry Leites for
discussions and correspondence.

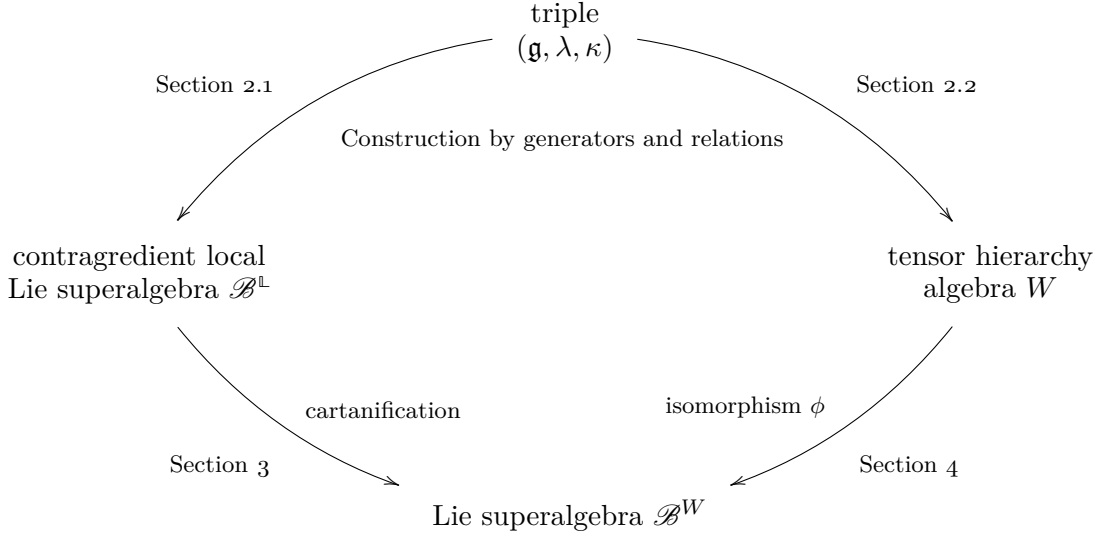
\begin{figure}[h]
\begin{align*}
\xymatrix@!0@C=5.6cm@R=3.2cm
{
&*+++\txt{triple\\$(\fg,\lambda,\kappa)$} 
*---\ar@/_2pc/[dl]_(.6)*+++{\txt{\footnotesize Section~\ref{Contragredient
Lie superalgebras}}}^*+++rrr{\txt{\footnotesize \qquad\qquad\quad Construction by generators and relations}}
\ar@/^2pc/[dr]^(.6)*+++{\txt{\footnotesize Section~\ref{tha-subsection}}} &\\  %&&&&\\
*+++\txt{contragredient local\\Lie superalgebra
$\scr B^\LLoc$}*---\ar@/_2pc/[dr]^*+++{\txt{\footnotesize cartanification}}_(.4)*+++{\txt{\footnotesize
Section~\ref{cartanification-section}}} %\ar[dr]^d          
&&*+++\txt{tensor hierarchy\\algebra $W$}*---\ar@/^2pc/[dl]_*+++{\txt{\footnotesize isomorphism $\phi$}}^(0.4)*+++{\txt{\footnotesize 
Section~\ref{isomorphism-section}}}\\  %&&&&\\
&*+++\txt{ 
Lie superalgebra $\scr B^W$}&}
\end{align*}
\caption{\textit{Organisation of the paper.}\label{organisation}}
\end{figure}

\section{From $(\fg,\lambda,\kappa)$ to $\scr B$ and $W$} \label{sec2}
We begin by introducing some notation. Let $r$ be a positive integer and set $I=\{1,\ldots,r\}$. We introduce
\begin{itemize}
\item $A=(A_{ij})_{i,j\in I}:$ a (generalised) Cartan matrix of rank $r$,
symmetrisable
and invertible, but not necessarily indecomposable;
\item $\mathfrak{g}=\fg(A):$ a Kac--Moody algebra with Cartan matrix $A$ and generators $\{e_i,f_i,h_i\}$ for ${i\in I}$ (not necessarily simple,
but semisimple);
\item $\mathfrak{h}:$ the Cartan subalgebra of $\fg$, spanned by the Cartan elements $h_i$ for $i\in I$;
\item $\kappa:$ an invariant symmetric bilinear form on $\fg$, uniquely given by $\kappa(e_i,f_j)=\delta_{ij}\epsilon_j$
for $i,j\in I$ and nonzero numbers $\epsilon_j$ such that $D^{-1}A$, where $D=\mathrm{diag}(\epsilon_i)_{i\in I}$, is a symmetric matrix;
\item $\varphi:$ the vector space isomorphism $\mathfrak{h} \to \mathfrak{h}^\ast$
given by $\varphi(h_i)(h_j)=\kappa(h_i,h_j)$;
\item $(-,-):$ the symmetric bilinear form on $\mathfrak{h}^\ast$ given by $(\mu,\nu)=\big(\varphi^{-1}(\mu),\varphi^{-1}(\nu)\big)$;
\item $\lambda=\sum_{i=1}^r \lambda_i \Lambda_i:$ a dominant integral weight of $\fg$ (the Dynkin labels $\lambda_i$ are non-negative integers).
\end{itemize}

By a weight of a Kac--Moody algebra we simply mean an element in the dual space of its Cartan subalgebra, not necessarily with
reference to a representation. For any weight $\mu$ of $\fg$, we set $h_{\mu^\crt}=\varphi^{-1}(\mu)$. For any root $\alpha$ of $\fg$,
we furthermore set $\alpha^\crt=\kappa(e_\alpha,e_{-\alpha})\alpha$, where $e_{\pm\alpha}$ are some fixed
corresponding root vectors, and $h_\alpha=h_{(\alpha^\crt)^\crt}$.
Then
\begin{align}
[e_\alpha,e_{-\alpha}]&=\kappa(e_\alpha,e_{-\alpha})\varphi^{-1}(\alpha)=h_\alpha\,, 
&[h_\alpha,e_\beta]&=\beta(h_\alpha)e_\beta=(\alpha^\crt,\beta)e_\beta\,
\end{align}
for any roots $\alpha,\beta$. Note that $h_{-\alpha}=-h_\alpha$. For any weight $\mu=\sum_{i=1}^r \mu_i \Lambda_i$ we also set
\begin{align}
\mu^\tri&=\sum_{i=1}^r \mu^\tri{}_i \Lambda_i= \sum_{i=1}^r \frac1{\epsilon_i}\mu_i \Lambda_i\,.
\end{align}
The fundamental weights $\Lambda_i$ are defined by $(\alpha_i{}^\crt,\Lambda_j)=\delta_{ij}$.

When $A$ is symmetric already before multiplying with $D^{-1}$, then there is a natural choice of $\kappa$, where 
$\kappa(e_i,f_i)=\epsilon_i=1$ for all $i\in I$. We will denote this
$\kappa$ by $\kappa_0$.

\subsection{The contragredient Lie superalgebra $\scr B$} \label{Contragredient Lie superalgebras}

We will now extend the Cartan matrix $A$ with one additional row and column $0$ to a matrix $B=(B_{ij})$, where ${i,j\in\{0\}\cup I}$, given by
\begin{align} \label{matrisdef}
B_{00}&=0\,,& B_{i0}&=-\lambda_i \,,& B_{0j}&=-\lambda^\tri{}_j\,,
& B_{ij}&=A_{ij}\,,
\end{align}
for $i,j\in I$. Thus, besides the entry $B_{00}=0$, we find $-\lambda$ in column $0$ and $-\lambda^\tri$ in row $0$.
It follows that $B$ is symmetrisable, and we also assume that $B$ is invertible.

To the matrix $B$, we associate a {contragredient Lie superalgebra} $\scr B$ 
defined 
from a set of $3(r+1)$ generators $\mangd_{\scr B}=\{e_i,f_i,h_i\}$ 
for $i \in\{0\}\cup I$, where
$e_0$ and $f_0$ are odd, and $h_0$ is even (like, as before, $e_i,f_i,h_i$ for $i\in I$). Let $\tilde{\scr B}$ be the $\mathbb{Z}$-graded Lie superalgebra
generated by this set $M_{\scr B}$ modulo the relations 
\begin{align}
[h_i,e_j]&=B_{ij}e_j\,, & [h_i,f_j]&=-B_{ij}f_j\,, & [e_i,f_j]&=\delta_{ij}h_j  
\end{align}
with the
$\mathbb{Z}$-grading where $e_i$ and $f_i$ have degree $1$ and $-1$, respectively, for any $i \in\{0\}\cup I$. 
Then $\scr B=\tilde{\scr B}/{\scr D}$, where ${\scr D}$
is the maximal graded ideal of $\tilde{\scr B}$ intersecting the local part of $\tilde{\scr B}$ trivially
\cite{Kac77B}.
Since $B$ here satisfies the conditions of a Cartan matrix of a {\it Borcherds--Kac--Moody algebra}, 
a generalisation \cite{Ray98} of the Gabber--Kac theorem \cite{Gabber-Kac,Kac} holds, which in this case says that the ideal ${\scr D}$
is generated by the Serre relations
\begin{align}
({\rm ad}\,e_i)^{1-B_{ij}}(e_j)=({\rm ad}\,f_i)^{1-B_{ij}}(f_j)=0\,.
\label{serre0}
\end{align} 
We extend $\kappa$ to an invariant bilinear form $\langle-|-\rangle$ on $\scr B$ with $\mathbb{Z}_2$-graded symmetry, uniquely given  
by $\langle e_0 | f_0\rangle=-\langle f_0 | e_0\rangle=1$ and $\langle e_i | f_i\rangle=\kappa(e_i , f_i)$ for $i\in I$.
In the same way as $\kappa$ for $\fg$,
it induces a non-degenerate symmetric bilinear form $(-,-)$ on the dual space of the Cartan subalgebra of $\scr B$ (spanned by $h_i$ for $i\in \{0\}\cup I$).
The roots of $\scr B$ are defined accordingly, and for any simple root $\alpha_i$ (where $i\in \{0\}\cup I$)
we set $\alpha_i{}^\crt=\langle e_i|f_i\rangle\alpha_i$. Thus $(\alpha_0,\alpha_0)=0$ and $\alpha_0{}^\crt=\alpha_0$.
We also note that $(\alpha_0{}^\crt,\mu)=-(\lambda,\mu)$ for any weight $\mu$ of $\fg$.
The Lie superalgebra $\scr B$ can be visualised by a Dynkin diagram where a grey node ($\otimes$), node $0$, is connected to node $i$
in the Dynkin diagram of $\fg$ with $\lambda_i$ lines. 

In the following, we will consider $\scr B$ as a $\mathbb{Z}$-graded Lie superalgebra {\it not} with respect to the ``democratic'' $\mathbb{Z}$-grading 
above, but with respect to the {\it consistent} one, where $\scr B_{\pm1}$ are odd subspaces, and the even subalgebra $\scr B_0$ is the direct sum  
of $\fg$ and a one-dimensional subalgebra.

\subsection{The tensor hierarchy algebra $W$} \label{tha-subsection}

We divide 
the index set $I$ into
a disjoint union of two subsets
\begin{align}
J&=\{\,j\in I \ |\  \lambda_j \neq 0\, \}\,,& K&=\{\,k\in I \ |\  \lambda_k = 0 \,\}\,.
\end{align}
In the Dynkin diagram of $\scr B$, they correspond to the sets of nodes connected to and disconnected from, respectively, node $0$.

We will now associate a different Lie superalgebra $W$, called tensor hierarchy algebra, to the same data $(\fg,\lambda,\kappa)$, 
following---and generalising---the definition in \cite{Carbone:2018xqq}.
In \cite{Carbone:2018xqq}, the tensor hierarchy algebra $W$ was defined for $J$ consisting of only one value,
which was chosen to always to be $1$, and moreover, the Dynkin label with this index was also equal to $1$, so that $\lambda_{i}=\delta_{1i}$.
We thus generalise this construction from fundamental weights to arbitrary dominant integral weights $\lambda$.
We also relax the condition in \cite{Carbone:2018xqq} that $\fg$ be simple and simply laced.

The construction of $W$ given in \cite{Carbone:2018xqq} starts with the 
Cartan matrix $B$ of the Lie superalgebra $\scr B$ associated to the triple $(\fg,\lambda,\kappa)$ as described in the preceding section.
The set of generators $M_{\scr B}=\{e_i,f_i,h_i\}$ for $i\in \{0\} \cup I$ of $\scr B$ is then modified
to a set $\mangd_{W}$ by replacing the odd generator $f_0$ with $|K|+1$ odd generators $f_{0i}$, where $i\in \{0\} \cup K$.
From this set of generators $\mangd_{W}$, and the Cartan matrix $B$, an auxiliary Lie superalgebra $\tilde W$
is first constructed as 
the one generated by $M_{W}$ modulo the relations
\begin{equation}
[e_i,f_j]=\delta_{ij}h_j\,,\qquad   
[h_i,f_j]=-B_{ij}f_j\,, \qquad 
[h_i,e_j]=B_{ij}e_j\,, \label{ordchevrel}
\end{equation}
\begin{equation}
[e_0,f_{0i}]=h_i\,,\qquad [h_i,f_{0j}]=-B_{i0}f_{0j}\,,\label{eifjf0a}
\end{equation}
\begin{equation}
({\rm ad}\,e_i)^{1-B_{ij}}(e_j)=({\rm ad}\,f_i)^{1-B_{ij}}(f_j)=0\,,
\label{serre1}
\end{equation}
for any $i,j,k\in \{0\}\cup I$ such that the involved generators are defined
(which means that $i \in \{0\}\cup K$ whenever $f_{0i}$ appears, and $i\in I$
whenever $f_i$ appears), and the additional relations
\begin{align}
[e_k,[f_l,f_{0i}]]= \delta_{kl}B_{il}f_{0k}&\quad\text{ for any $k,l\in K$ and $i\in\{0\}\cup K$}\,,\label{addrel1} \\
({\rm ad}\,f_j)^{1-B_{j0}}(f_{0i})=[e_j,f_{0i}]=0&\quad\text{ for any $j\in J$ and $i\in\{0\}\cup K$}\,,\label{addrel2}\\
[e_k,[e_k,f_{0i}]]=[f_k,[f_k,f_{0i}]]=0 &\quad\text{ for any $k\in K$ and $i\in\{0\}\cup K$}\,,\label{addrel4}\\
[e_j,[e_k,f_{0i}]]=0 &\quad\text{ for any $j\in J$, $k \in K$ and $i\in\{0\}\cup K$}\,.\label{addrel3}
\end{align}
Then $W$ is defined as the Lie superalgebra obtained from $\tilde W$ by factoring out the maximal ideal intersecting the local part trivially,
with respect to the consistent $\mathbb{Z}$-grading. By modifying the set of generators further
to $\mangd{}_S=\mangd{}_W\backslash\{h_0,f_{00}\}$ a Lie superalgebra $S$ (called tensor hierarchy algebra as well)
can be defined in the same way, with the relations
involving $h_0$ and $f_{00}$ removed.

\begin{rmk}
A remark on the relations (\ref{addrel1})--(\ref{addrel3}):
When $\fg$ is simply laced, only the relations (\ref{addrel1}) and (\ref{addrel2})
are needed.
It was shown in
\cite{Cederwall:2019qnw} how the relations 
(\ref{addrel4})
follow from the former in the simply laced case.
Furthermore, the relations 
(\ref{addrel3}) for $i=k$
follow from $[e_j,f_{0i}]=0$ trivially when $B_{ij}=0$ and by
\begin{align}
2[e_j,e_i,f_{0i}]&=[e_j,e_i,e_i,f_i,f_{0i}]\nn\\
&=2[e_i,e_j,e_i,f_i,f_{0i}]-[e_i,e_i,e_j,f_i,f_{0i}]\nn\\
&=4[e_i,e_j,f_{0i}]-[e_i,e_i,f_i,e_j,f_{0i}]=0\, \label{harledn}
\end{align}
when $B_{ij}=-1$ (using a notation for nested brackets where, for example, $[e_i,e_j,f_{0i}]$ means $[e_i,[e_j,f_{0i}]]$).
Then (\ref{addrel1}) can be used to show that (\ref{addrel3}) hold also for $i\neq k$.
We do not know if the relations (\ref{addrel4}) and (\ref{addrel3}) are redundant also 
when $\fg$ is not simply laced, but since we have not been able to derive them 
from (\ref{ordchevrel})--(\ref{serre1}) in the general case, we have included them here.

On the other hand, additional relations quadratic in $f_{0i}$ were included in \cite{Cederwall:2019qnw,Cederwall:2021ymp}. We conjecture that 
the corresponding ideal is contained in the maximal ideal of
$\tilde W$ intersecting the local part trivially, and thus vanishes in $W$. Although we have no general proof, we have excluded these relations
here since we will not need them and since we aim at keeping the definition as simple as possible.
\qed
\end{rmk}

\noindent
The following proposition was proven in \cite{Carbone:2018xqq} (second part of Proposition 3.2) and the proof does not use that
$\fg$ is simply laced, so it holds also in our more general setting. However, it has to be formulated slightly differently, 
with the indices of the Cartan matrix interchanged,
since we do not assume the Cartan matrix to be symmetric here.
\begin{prop} \label{flindepprop}
For any $i,j,k\in \{0\}\cup K$, we have
\begin{align}
B_{ji}[e_i,f_{0k}]&=B_{ki}[e_i,f_{0j}]\,, &
B_{ji}[f_i,f_{0k}]&=B_{ki}[f_i,f_{0j}]\,. \label{flindep}
\end{align}
\end{prop}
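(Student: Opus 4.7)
The plan is to reduce both identities in (\ref{flindep}) to a single ``rigidification'' statement: for fixed $i\in K$ and varying $c\in\{0\}\cup K$, the brackets $[e_i,f_{0c}]$ and $[f_i,f_{0c}]$ are proportional to $[e_i,f_{0i}]$ and $[f_i,f_{0i}]$ respectively, with coefficient $B_{ci}/B_{ii}$. Concretely, I aim to prove
\begin{equation*}
B_{ii}\,[e_i,f_{0c}] = B_{ci}\,[e_i,f_{0i}], \qquad B_{ii}\,[f_i,f_{0c}] = B_{ci}\,[f_i,f_{0i}],
\end{equation*}
for $i\in K$ and $c\in\{0\}\cup K$. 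Granting these, each side of the two identities in the proposition reduces to the common expression $(B_{ji}B_{ki}/B_{ii})[e_i,f_{0i}]$ (respectively with $f_i$), so equality is immediate. The case $i=0$ is trivial, since $B_{j0}=B_{k0}=0$ for all $j,k\in\{0\}\cup K$.

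To obtain the first rigidification, I would first apply the graded Jacobi identity to relation (\ref{addrel1}) with $k=l=i\in K$, together with the observation that $[h_i,f_{0c}]=-B_{i0}f_{0c}$ vanishes (because $i\in K$ forces $B_{i0}=-\lambda_i=0$), to deduce the intermediate equation $[f_i,[e_i,f_{0c}]]=B_{ci}f_{0i}$. Next I would compute the triple bracket $[e_i,[f_i,[e_i,f_{0c}]]]$ in two ways. Direct substitution of the intermediate equation produces $B_{ci}[e_i,f_{0i}]$. The Jacobi expansion $[e_i,[f_i,X]]=[h_i,X]+[f_i,[e_i,X]]$, applied with $X=[e_i,f_{0c}]$, produces $[h_i,[e_i,f_{0c}]]=(A_{ii}+\lambda_i)[e_i,f_{0c}]=2[e_i,f_{0c}]$, because the other term vanishes by the Serre-type relation (\ref{addrel4}) $[e_i,[e_i,f_{0c}]]=0$. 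Equating the two evaluations gives $B_{ii}[e_i,f_{0c}]=B_{ci}[e_i,f_{0i}]$, which is the first rigidification.

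The second rigidification is obtained by the same recipe with $e_i$ and $f_i$ exchanged in the outer positions of the triple bracket: I would compute $[f_i,[e_i,[f_i,f_{0c}]]]$ by direct application of (\ref{addrel1}) in the form $[e_i,[f_i,f_{0c}]]=B_{ci}f_{0i}$, and compare with the Jacobi expansion, whose key ingredients are $[f_i,[f_i,f_{0c}]]=0$ from (\ref{addrel4}) and the weight calculation $[h_i,[f_i,f_{0c}]]=-2[f_i,f_{0c}]$. The two evaluations coincide and give $B_{ii}[f_i,f_{0c}]=B_{ci}[f_i,f_{0i}]$. I do not anticipate any serious obstacle: the outer elements $e_i$, $f_i$, $h_i$ are all even, so no odd-sign corrections appear in the graded Jacobi identity, and the whole argument ultimately hinges on the single numerical fact $B_{i0}=0$ for $i\in K$ (which is what makes $[h_i,f_{0c}]$ vanish and allows the nested Jacobi moves to collapse cleanly).
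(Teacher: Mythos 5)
Your proof is correct, and it is essentially the argument behind the result: the paper itself does not reprove this proposition but defers to the second part of Proposition~3.2 of \cite{Carbone:2018xqq}, whose computation is precisely your ``rigidification'' $B_{ii}[e_i,f_{0c}]=B_{ci}[e_i,f_{0i}]$ (and its $f_i$ analogue), obtained by evaluating $[e_i,[f_i,[e_i,f_{0c}]]]$ and $[f_i,[e_i,[f_i,f_{0c}]]]$ in two ways using (\ref{addrel1}), (\ref{addrel4}) and the vanishing of $[h_i,f_{0c}]$ for $i\in K$. All steps check out, including the reduction of the proposition to the rigidification via $B_{ii}=2\neq0$ and the vacuous case $i=0$.
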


\noindent
The proposition says that $[e_i,f_{0j}]=0$ (and $[f_i,f_{0j}]=0$ if $i\notin J$) whenever $B_{ij}=0$ and $i\neq0$.
In particular, $f_{00}$ behaves as the ``usual'' $f_0$ in $\scr B$.
It also says that any two elements $[e_i,f_{0j}]$ and $[e_i,f_{0k}]$ (or $[f_i,f_{0j}]$ and $[f_i,f_{0k}]$ if $i\notin J$) are linearly dependent.

\begin{prop} \label{tha-struktur0}
The $\fg$-module $U(\fg)(f_{00})$ in $W_{-1}$ is an irreducible $\fg$-module with highest weight $\lambda$. 
\end{prop}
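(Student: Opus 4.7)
The plan is to identify $f_{00}$ as a highest-weight vector of $\fg$-weight $\lambda$ satisfying the standard integrability relations, so that the $\fg$-submodule it generates is a homomorphic image of the irreducible integrable highest-weight module $L(\lambda)$; nonvanishing of $f_{00}$ in $W$ will then promote this to an isomorphism.

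First, the $\fg$-weight of $f_{00}$ is read off from relation (\ref{eifjf0a}) with $j=0$: $[h_i,f_{00}] = -B_{i0}\,f_{00} = \lambda_i\,f_{00}$ for all $i\in I$. I would next verify $[e_i,f_{00}]=0$ for $i\in I$, splitting into cases. For $i\in J$, this is a direct instance of relation (\ref{addrel2}) specialised to $f_{00}$. For $i\in K$, I would apply the first identity of Proposition~\ref{flindepprop} with $j=i$ and the free index set to $0$, obtaining $B_{ii}[e_i,f_{00}] = B_{0i}[e_i,f_{0i}]$; since $B_{0i}=-\lambda^\tri_i=0$ (as $\lambda_i=0$) while $B_{ii}=2$, this forces $[e_i,f_{00}]=0$. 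The integrability conditions $(\ad f_i)^{1+\lambda_i}(f_{00})=0$ are then checked in the same spirit: for $i\in J$ this is exactly (\ref{addrel2}) with $f_{0i}$-index $0$, since $1-B_{i0}=1+\lambda_i$; for $i\in K$ it reduces to $[f_i,f_{00}]=0$, which follows from the second identity of Proposition~\ref{flindepprop} by the specialisation $j=i$, $k=0$.

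The standard classification of irreducible integrable highest-weight modules over the symmetrisable Kac--Moody algebra $\fg$ now implies that $U(\fg)(f_{00})\subseteq W_{-1}$ is a homomorphic image of $L(\lambda)$, hence either zero or isomorphic to $L(\lambda)$. Since $f_{00}$ is a generator of $\tilde W$ lying in the local part at degree $-1$, and $W$ is constructed as the quotient of $\tilde W$ by the maximal graded ideal intersecting the local part trivially, the image of $f_{00}$ in $W$ is nonzero, and hence $U(\fg)(f_{00})\cong L(\lambda)$. The main---and only really subtle---point is this final nonvanishing of $f_{00}$ in $W$, but it is immediate from the very definition of $W$ as a maximal local extension.
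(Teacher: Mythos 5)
Your proposal is correct and follows essentially the same route as the paper: the paper's proof consists precisely of noting that $[e_i,f_{00}]=0$ and $(\mathrm{ad}\,f_i)^{1+\lambda_i}(f_{00})=0$ for all $i\in I$ follow from the defining relations and Proposition~\ref{flindepprop}, which is exactly the case analysis you carry out in detail. Your additional remarks on the weight computation, the appeal to the classification of integrable highest-weight modules, and the nonvanishing of $f_{00}$ in $W$ simply make explicit what the paper leaves implicit.
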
 

\begin{proof}  
It follows from the defining relations 
(\ref{ordchevrel})--(\ref{addrel3}),
the additional relations (\ref{flindep}) and the discussion above that
\begin{align}
[e_i,f_{00}]&=0\,, & (\ad{f_i})^{1+\lambda_j}(f_{00})&=0\,,
\end{align}
for any $i\in I$.
\end{proof}

\begin{rmk}
Note that the element $f_{00}$, and thus
$U(\fg)(f_{00})$, is absent in $S$.\qed
\end{rmk}

\noindent
Since we can identify $f_0$ in $\scr B$ with $f_{00}$ in $W$, and all the other generators 
of $\scr B$ coincide with generators in $W$, there is a homomorphism from $\scrB$ to $W$
preserving the $\mathbb{Z}$-degree. However, if $K$ is non-empty, then $W_{-1}$ is bigger than $\scr B_{-1}$
since we can form, for example, nonzero elements $[e_i,f_{0k}]$. When $\fg$ is infinite-dimensional, these additional
elements in $W_{-1}$ are known to give rise to
additional modules as quotients besides $\scr B_0$ and $\scrB_1$ in $W_0$ and $W_1$ \cite{Bossard:2017wxl,Cederwall:2019qnw,Cederwall:2021ymp}.
In the present paper, we will 
not study the
structure of these extra modules
in $W_0$ and $W_1$, but focus on the structure of $W_{-1}$ as a $\fg$-module
that has been studied previously 
for finite-dimensional $\fg$. 
We will investigate to what extent it still appears
in our more general setting---and in the cartanification of $\scrB$, which is the topic of the next section.

Let $\fg^-$ be the semisimple subalgebra of $\fg$ generated by $\{e_k,f_k,h_k\}$ for $k\in K$. Thus the Dynkin diagram of $\fg^-$
is obtained from the one of $\scr B$ by removing the grey node as well as all ordinary white nodes connected to it.
We assume that $K$ is non-empty so that $\fg^-$ is non-trivial. (Otherwise $W$ coincides with $\scrB$ under the identification $f_{00}=f_0$.)
We also assume that the Cartan matrix of $\fg^-$ is invertible. When $|J|=1$, so that $\lambda$ is a multiple of a fundamental weight $\Lambda_j$,
this condition is equivalent to the condition that $B$ is invertible, which we have already assumed.

It is
convenient to define $f_{0\mu^\crt}$ for any weight $\mu$ of $\fg^-$ as the linear combination of 
$f_{0k}$ (for $k\in K$)
which has the same coefficients as $\mu$
has in the basis $\alpha_k{}^\crt$. We then have 
$[e_0,f_{0\mu^\crt}]=h_{\mu^\crt}$ and
the relations (\ref{flindep})
can be expressed more generally as
\begin{align}
(\mu,\alpha_i)[e_i,f_{0\nu^\crt}]&=(\nu,\alpha_i)[e_i,f_{0\mu^\crt}]\,, & 
(\mu,\alpha_i)[f_i,f_{0\nu^\crt}]&=(\nu,\alpha_i)[f_i,f_{0\mu^\crt}]\,. \label{flindepgen}
\end{align}
As the next proposition shows, it is possible to generalise this further, replacing $e_i$ and $f_i$
with $e_\alpha$ for any root in the same Weyl orbit as $\alpha_i$.
Also the other relations above involving $f_{0k}$ but not $e_j$ or $f_j$ for $j\in J$ can be
generalised accordingly.

\begin{prop}
For any real root $\alpha$ of $\fg^-$
and weights $\mu,\nu$ of $\fg^-$,
we have
\begin{align}
[e_\alpha,[e_\alpha,f_{0\mu^\crt}]]&=
0\,,\label{alphaaplha}\\
[e_\alpha,[e_{-\alpha},f_{0\mu^\crt}]]&=(\mu,\alpha)f_{0\alpha^\crt}\,,\label{alphaminusaplha}\\
(\mu,\alpha)[e_\alpha,f_{0\nu^\crt}]&=(\nu,\alpha)[e_\alpha,f_{0\mu^\crt}]\,. \label{mulambda}
\end{align}
\end{prop}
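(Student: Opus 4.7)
The plan is to prove the three identities simultaneously by induction on the height of the real root $\alpha$ in the root system of $\fg^-$.

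First I handle the base case $\alpha = \pm\alpha_k$ for a simple root $\alpha_k$ of $\fg^-$ (\emph{i.e.}\ $k\in K$). In this case (\ref{mulambda}) specializes to (\ref{flindepgen}); (\ref{alphaaplha}) follows from (\ref{addrel4}) by linearity in $\mu$ (writing $f_{0\mu^\crt} = \sum_j c_j f_{0j}$ with $\mu = \sum_j c_j \alpha_j^\crt$); and (\ref{alphaminusaplha}) follows from (\ref{addrel1}), since $[e_k,[f_k,f_{0j}]] = B_{jk} f_{0k}$ with $B_{jk} = (\alpha_j^\crt,\alpha_k)$, so summing in $j$ against $c_j$ gives the required $(\mu,\alpha_k) f_{0\alpha_k^\crt}$. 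The case $\alpha = -\alpha_k$ is then handled by a single Jacobi manipulation from the case $\alpha = \alpha_k$, hinging on the crucial observation that $\fh^-$ acts trivially on every $f_{0j}$: indeed $[h_k,f_{0j}] = -B_{k0} f_{0j} = \lambda_k f_{0j}$ vanishes because $\lambda_k = 0$ for $k\in K$.

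For the inductive step, I pick a real root $\alpha$ of $\fg^-$ of height greater than one (in absolute value) together with a simple root $\alpha_i$ of $\fg^-$ such that $\alpha' := \alpha - \alpha_i$ is again a real root of strictly smaller height; then $e_\alpha$ is a nonzero scalar multiple of $[e_i,e_{\alpha'}]$ (and similarly for $e_{-\alpha}$). For each of the three identities I substitute this expression and expand using the Jacobi identity, so that every bracket involving $e_\alpha$ reduces to nested brackets in $e_i$, $e_{\alpha'}$, and $f_{0\mu^\crt}$, after which the inductive hypothesis for $\alpha'$ and the base case for $\alpha_i$ apply term by term. The Cartan-valued terms produced by Jacobi expansions drop out because of the triviality of the $\fh^-$-action, and the additive decompositions $(\mu,\alpha) = (\mu,\alpha_i) + (\mu,\alpha')$ together with the corresponding decomposition of $\alpha^\crt$ (expressible via $\alpha_i^\crt$ and $\alpha'^\crt$ up to the scalar $\kappa(e_\alpha,e_{-\alpha})$) reassemble the right-hand sides into the required form.

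The main obstacle lies in establishing (\ref{mulambda}) for $\alpha$ in the inductive step: a direct Jacobi expansion of $[e_\alpha,f_{0\mu^\crt}]$ produces a combination with \emph{independent} $(\mu,\alpha_i)$ and $(\mu,\alpha')$ contributions, and collapsing these into a single $(\mu,\alpha)$-proportional term demands the auxiliary identity $[e_i,F_{\alpha'}] + [e_{\alpha'},F_i] = 0$, where $F_\beta\in W_{-1}$ denotes the element characterized by $[e_\beta,f_{0\mu^\crt}] = (\mu,\beta) F_\beta$ (well defined by (\ref{mulambda}) for $\beta$). Conceptually, this auxiliary identity reflects the fact that the $\fg^-$-submodule of $W_{-1}$ generated by $\{f_{0\mu^\crt}\}$ ought to carry the adjoint representation of $\fg^-$, with each $F_\beta$ playing the role of the root vector $e_\beta$. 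The cleanest way to close the argument will therefore be to carry (\ref{mulambda}), (\ref{alphaaplha}), (\ref{alphaminusaplha}), and this auxiliary identity through the induction in tandem, so that the full bracket structure on $\{F_\beta, f_{0\mu^\crt}\}$ is propagated at once.
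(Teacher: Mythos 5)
Your base case is sound, but the inductive step has a genuine gap, and you have in fact located it yourself: everything hinges on the auxiliary identity $[e_i,F_{\alpha'}]+[e_{\alpha'},F_i]=0$, which you never establish --- you only observe that it \emph{ought} to hold because the module generated by the $f_{0k}$ should be the adjoint, and propose to ``carry it through the induction in tandem''. That identity is precisely the $\fg^-$-equivariance of the map $e_\beta\mapsto e_\beta{}^\sh$, which the paper proves only later (Proposition~\ref{equivariance-prop}) \emph{as a consequence of} the present proposition, so as written your plan assumes a piece of what is to be derived from the statement being proved. Closing the loop is not a routine addition: expanding $[e_i,F_{\alpha'}]+[e_{\alpha'},F_i]$ with $e_{\alpha'}$ itself written as a bracket produces mixed nested terms such as $[e_{\alpha''},[e_i,F_{i'}]]$, whose control requires knowing $[e_\beta,F_\gamma]$ for pairs of roots with $\beta+\gamma$ possibly not a root --- essentially the full adjoint-module structure --- and none of that is supplied. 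A secondary problem is that the proposition is stated for an arbitrary real root of $\fg^-$, which at this point of the paper need not be finite-dimensional; for infinite-dimensional $\fg^-$ a real root of height $\geq 2$ need not decompose as $\alpha_i+\alpha'$ with $\alpha'$ a \emph{real} root (in affine $A_1$ the real root $\alpha_0+2\alpha_1$ gives $\alpha_0+\alpha_1=\delta$ imaginary and $2\alpha_1$ not a root), so the height induction does not even reach all the roots covered by the statement.

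For contrast, the paper's proof sidesteps both difficulties: it checks the three relations for simple roots directly from (\ref{addrel1}), (\ref{addrel4}) and Proposition~\ref{flindepprop}, and then transports them to an arbitrary real root $\alpha=w(\alpha_k)$ by the integrated Weyl-group automorphisms $r_k=e^{\ad f_k}e^{-\ad e_k}e^{\ad f_k}$ of $W$, using $w(f_{0\mu^\crt})=f_{0\,w(\mu)^\crt}$ and the Weyl invariance of $(-,-)$. If you wish to keep a purely bracket-theoretic induction, you must at minimum restrict to the case where every root of $\fg^-$ admits the required decomposition and write out the inductive step for the equivariance identity in full; until then the proposal is a plan rather than a proof.
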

\begin{proof}
When $\alpha$ is a simple root, (\ref{alphaaplha}) and (\ref{alphaminusaplha}) follow from (\ref{addrel1}) and (\ref{addrel4}), respectively,
and (\ref{mulambda}), as we just discussed, from Proposition~\ref{flindepprop}. The idea is to generalise these special cases by applying Weyl
transformations.

For $k\in K$, the
maps $\ad{e_k}$ and $\ad{f_k}$ are locally nilpotent on 
the generators of $W$, and it follows that they are 
locally nilpotent on 
any element of $W$ (see for example \cite[Lemma~3.4]{Kac}).
Thus the exponentials 
$e^{\,\ad{e_k}}$ 
and $e^{\,\ad{f_k}}$
can be defined as endomorphisms of $W$ by their power series, and in the same way as 
when they are considered as endomorphisms of $\fg^-$, it follows that they are automorphisms of the Lie superalgebra $W$.
Furthermore, the compositions
\begin{align} \label{rk}
r_k= e^{\ad{f_k}}e^{-\ad{e_k}}e^{\ad{f_k}}\,,
\end{align}
act on the Cartan subalgebra $\mathfrak{h}$ of $\fg$ by
\begin{align}
r_k(h_{\mu^\crt})=h_{\mu^\crt}-(\alpha_k,\mu)h_k
\end{align}
and induce the fundamental Weyl reflections
\begin{align}
r_k{}^\ast(\mu)=\mu-(\alpha_k{},\mu)\alpha_k{}^\crt
\end{align}
on the dual space $\mathfrak{h}^\ast$, by 
$\varphi\big(r_k(h)\big)=r_k{}^\ast \big(\varphi(h)\big)$ for any $h \in \mathfrak{h}$.
The fundamental Weyl reflections generate the Weyl group $\scr W^-$ of $\fg^-$,
which thus is isomorphic to the group of automorphisms of $W$ generated by the elements $r_k$,
with an isomorphism $\psi$ given by $\psi(r_k{}^\ast)=r_k$.
It then
follows that $h_{w(\mu)^\crt}=\psi(w)(h_{\mu^\crt})$ for any $w\in\scr W^-$.
Furthermore, $\psi(w)(e_\alpha)$ is a root vector corresponding to $w(\alpha)$ 
for any root $\alpha$ of $\fg^-$,
so we may set $e_\alpha=\psi(w)(e_k)$ if $\alpha=w(\alpha_k)$. 
Finally, it is straightforward to check that we have
$w(f_{0\mu^\crt})=f_{0\psi(w)(\mu)^\crt}$ as expected.

Since we know that the relations (\ref{mulambda}) hold when $\alpha$ is a simple root $\alpha_k$
(where $k\in K$),
we can then for any root $\alpha$ in the Weyl orbit of $\alpha_k$
apply the automorphism $\psi(w)$, where $w$ is the element in the Weyl group mapping $\alpha_k$ to $\alpha$.
For example, the relation
\begin{align}
[e_k,[f_k,f_{0\mu^\crt}]]&=(\mu,\alpha_k)f_{0k}
\end{align}
gives
\begin{align}
[e_\alpha,[e_{-\alpha},f_{0\nu^\crt}]]&=(\mu,\alpha_k)f_{0\alpha}
\end{align}
where $\nu=w(\mu)$. Since Weyl reflections preserve the inner product, we have
\begin{align}
(\mu,\alpha_k)=\big(w(\mu),w(\alpha_k)\big)=(\nu,\alpha)\,,
\end{align}
and we get
\begin{align}
[e_\alpha,[e_{-\alpha},f_{0\nu^\crt}]]&=(\nu,\alpha)f_{0\alpha^\crt}\,,
\end{align}
which is (\ref{mulambda}).
The relations (\ref{alphaaplha}) and (\ref{alphaminusaplha}) can be shown in the same way.
\end{proof}

\noindent
Now, assume that $\fg^-$ is finite-dimensional, so that any root is real. Set
\begin{align}
e_\alpha{}^\sh=\frac1{(\mu,\alpha)}[e_\alpha,f_{0\mu^\crt}]\,
\end{align}
for any root $\alpha$ of $\fg^-$,
where $\mu$ is some weight such that $(\alpha,\mu)\neq0$.
Obviously, there is always such a weight, and thanks to (\ref{mulambda}),
the choice of $\mu$ does not matter, so $e_\alpha{}^\sh$
is well defined. For the Cartan elements $h_k$ of $\fg^-$ (where $k\in K$), set $h_k{}^\sh=-f_{0k}$.
By linearity we thus get a map $\sh$ from $\fg^-$ to the module over $\fg^-$ generated by the elements $f_{0k}$.
The following proposition says that this map $\sh$ is $\fg^-$-equivariant,
and thus that the module over $\fg^-$ generated by the elements $f_{0k}$ is isomorphic to the adjoint.

\begin{prop} \label{equivariance-prop}
If $\fg^-$ is finite-dimensional, then
$[x,y^\sh]=[x,y]^\sh$ for any $x,y\in\fg^-$.
\end{prop}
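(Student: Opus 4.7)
The plan is to verify the identity $[x, y^\sh] = [x, y]^\sh$ on a Chevalley basis of $\fg^-$, using bilinearity in $(x, y)$ to reduce to Cartan elements $h_{\nu^\crt}$ and root vectors $e_\alpha$. Before entering the cases, I record the key identity
\[
[e_\alpha, f_{0\mu^\crt}] = (\mu, \alpha)\, e_\alpha^\sh \qquad \text{for every weight } \mu,
\]
valid by the definition of $e_\alpha^\sh$ when $(\mu, \alpha) \neq 0$, and for $(\mu, \alpha) = 0$ by (\ref{mulambda}), which forces $[e_\alpha, f_{0\mu^\crt}] = 0$.

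First I dispatch the Cartan-slot cases. Since $[h_k, f_{0l}] = -B_{k0}\,f_{0l} = \lambda_k\,f_{0l} = 0$ for $k, l \in K$ by (\ref{eifjf0a}), every $f_{0\mu^\crt}$ is $\mathfrak{h}(\fg^-)$-invariant, so $y^\sh$ carries the same $\mathfrak{h}(\fg^-)$-weight as $y$. From this, $[h_{\nu^\crt}, y^\sh] = [h_{\nu^\crt}, y]^\sh$ is immediate for any weight vector $y$, and $[e_\alpha, h_{\mu^\crt}^\sh] = -[e_\alpha, f_{0\mu^\crt}] = -(\mu, \alpha)\, e_\alpha^\sh = [e_\alpha, h_{\mu^\crt}]^\sh$ follows directly from the displayed identity.

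The central case is $x = e_\alpha$, $y = e_\beta$. Applying super Jacobi to $(e_\alpha, e_\beta, f_{0\mu^\crt})$ and substituting the displayed identity on each side yields
\[
(\mu, \beta)\,[e_\alpha, e_\beta^\sh] - (\mu, \alpha)\,[e_\beta, e_\alpha^\sh] = [[e_\alpha, e_\beta], f_{0\mu^\crt}]\,.
\]
When $\alpha, \beta$ are linearly independent, I exploit the non-degeneracy of the bilinear form on $\mathfrak{h}^*(\fg^-)$ (guaranteed by the invertibility of the Cartan matrix of $\fg^-$) to pick $\mu$ with $(\mu, \alpha) = 0$ and $(\mu, \beta) \neq 0$. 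This isolates $(\mu, \beta)\,[e_\alpha, e_\beta^\sh]$ on the left. On the right, either $\alpha + \beta$ is not a root and the bracket vanishes, giving $[e_\alpha, e_\beta^\sh] = 0 = [e_\alpha, e_\beta]^\sh$; or $[e_\alpha, e_\beta] = c\,e_{\alpha+\beta}$, in which case the right-hand side becomes $c(\mu, \alpha+\beta)\,e_{\alpha+\beta}^\sh = c(\mu, \beta)\,e_{\alpha+\beta}^\sh$, whence $[e_\alpha, e_\beta^\sh] = c\,e_{\alpha+\beta}^\sh = [e_\alpha, e_\beta]^\sh$.

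Only the linearly dependent cases $\beta = \pm \alpha$ remain (the only such cases in a finite-dimensional root system without $2\alpha$-roots). For $\beta = \alpha$, (\ref{alphaaplha}) gives $[e_\alpha, e_\alpha^\sh] = 0 = [e_\alpha, e_\alpha]^\sh$; for $\beta = -\alpha$, (\ref{alphaminusaplha}) yields $[e_\alpha, e_{-\alpha}^\sh] = -f_{0\alpha^\crt}$, which must be matched against $h_\alpha^\sh$ obtained from $h_\alpha = h_{(\alpha^\crt)^\crt}$ and the linear extension $h_{\mu^\crt}^\sh = -f_{0\mu^\crt}$. This last identification is the main bookkeeping obstacle: it requires tracking the scalar $\kappa(e_\alpha, e_{-\alpha})$ implicit in the $\crt$-notation and in the Weyl-orbit choice $e_\alpha = \psi(w)(e_k)$, and it is forced to be consistent by the invariance of $\kappa$ under the Weyl automorphisms $\psi(w)$ of $W$.
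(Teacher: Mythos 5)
Your proof is correct and follows essentially the same route as the paper's: reduce to basis elements, handle the Cartan cases by the triviality of the $\mathfrak{h}(\fg^-)$-action on the $f_{0k}$, and for two root vectors choose $\mu$ orthogonal to $\alpha$ so that the Jacobi identity moves $e_\alpha$ onto $e_\beta$, with the $\beta=\pm\alpha$ cases settled by (\ref{alphaaplha}) and (\ref{alphaminusaplha}). The only cosmetic differences are that you produce $\mu$ by non-degeneracy of the form rather than by the explicit projection $\mu=\beta-\tfrac{(\alpha,\beta)}{(\alpha,\alpha)}\alpha$ with Cauchy--Schwarz, and your final $\beta=-\alpha$ bookkeeping ($h_\alpha{}^\sh=-f_{0\alpha^\crt}$) is immediate from the definitions $\alpha^\crt=\kappa(e_\alpha,e_{-\alpha})\alpha$ and $h_{\mu^\crt}{}^\sh=-f_{0\mu^\crt}$, so no Weyl-invariance argument is actually needed there.
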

\begin{proof}
When both $x$ and $y$ are Cartan elements, then both sides are zero.
When $x$ is a root vector and $y$ is a Cartan element,
\begin{align}
[e_\alpha,h_{\mu^\crt}{}^\sh]=-[e_\alpha,f_{0{\mu^\crt}}]=-(\mu,\alpha)e_\alpha{}^\sh=[e_\alpha,h_{\mu^\crt}]^\sh\,.
\end{align}
When both $x$ and $y$ are root vectors,
set $x=e_\alpha$ and $y=e_\beta$. If $\beta=\pm\alpha$, then the proposition follows from (\ref{alphaaplha}) and (\ref{alphaminusaplha}).
If $\beta\neq\pm\alpha$, then
set
\begin{align}
\mu = \beta-\frac{(\alpha,\beta)}{(\alpha,\alpha)}\alpha
\end{align}
so that $(\mu,\alpha)=0$ and
\begin{align}
(\mu,\beta)=\frac1{(\alpha,\alpha)}\big((\alpha,\alpha)(\beta,\beta)-(\alpha,\beta)^2\big)>0
\end{align}
by the Cauchy--Schwarz inequality, since $\alpha$ and $\beta$ are linearly independent.
We then get
\begin{align}
[e_\alpha,e_\beta{}^\sh]&=\frac1{(\beta,\mu)}[e_\alpha,[e_\beta,f_{0{\mu^\crt}}]]
=\frac1{(\beta,\mu)}[[e_\alpha,e_\beta],f_{0{\mu^\crt}}]\nn\\&=\frac1{(\beta+\alpha,\mu)}[[e_\alpha,e_\beta],f_{0{\mu^\crt}}]=[e_\alpha,e_\beta]^\sh\,.
\end{align}
\end{proof}

\subsection{Structure of $W_{-1}$} \label{tha-structure}

We will now describe the structure of $W_{-1}$ as a $\fg$-module under certain conditions (which imply that $\fg$ is finite-dimensional).
First we need the following two lemmas.

\begin{lemma}\label{lemma4.5}
If $(\alpha_j{}^\crt,\alpha)=0,\pm1$ for some $j\in J$ and root $\alpha$ of $\fg^-$, then $[e_j,[e_\alpha,f_{0{\mu^\crt}}]]=0$
for any weight $\mu$ of $\fg^-$.
\end{lemma}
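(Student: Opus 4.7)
Plan: First, (\ref{addrel2}) yields $[e_j, f_{0i}] = 0$ for all $i \in \{0\} \cup K$, hence by linearity $[e_j, f_{0\mu^\crt}] = 0$. Combined with the Jacobi identity this gives $[e_j, [e_\alpha, f_{0\mu^\crt}]] = [[e_j, e_\alpha], f_{0\mu^\crt}]$. By the preceding proposition and the definition of $e_\alpha^\sh$ (for $\alpha$ a root of $\fg^-$), one has $[e_\alpha, f_{0\mu^\crt}] = (\mu, \alpha)\, e_\alpha^\sh$, so
\[
[e_j, [e_\alpha, f_{0\mu^\crt}]] = (\mu, \alpha)\,[e_j, e_\alpha^\sh].
\]
Since $\mu$ can always be chosen with $(\mu, \alpha) \neq 0$, it suffices to establish $[e_j, e_\alpha^\sh] = 0$.

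I then argue this by induction on the height of $\alpha$ as a root of $\fg^-$, with the negative-root case handled symmetrically. For the base case $\alpha = \alpha_k$ with $k \in K$, take $\mu = \alpha_k{}^\crt$ to get $e_k^\sh = \tfrac{1}{2}[e_k, f_{0k}]$, and then (\ref{addrel3}) gives $[e_j, e_k^\sh] = \tfrac{1}{2}[e_j, [e_k, f_{0k}]] = 0$.

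For the inductive step with $\alpha$ of height $\geq 2$, the key combinatorial observation is that the support of $\alpha = \sum_{k \in K} c_k \alpha_k$ (with $c_k \geq 0$) must contain at least one index $k$ with $A_{jk} = 0$. Indeed, $(\alpha_j{}^\crt, \alpha) = \sum_k c_k A_{jk}$ is a sum of non-positive terms (using $A_{jk} \leq 0$ for $j \neq k$), and if every simple root in the support satisfied $A_{jk} < 0$, then for $\alpha$ non-simple (hence at least two simple summands, each contributing $\leq -1$) the sum would be $\leq -2$, contradicting $(\alpha_j{}^\crt, \alpha) \in \{0, \pm 1\}$. Choosing such a $k$, decompose $\alpha = \alpha_k + \alpha'$ with $\alpha'$ a root of $\fg^-$ of smaller height; by Proposition~\ref{equivariance-prop}, $e_\alpha^\sh \propto [e_k, e_{\alpha'}^\sh]$, whence
\[
[e_j, e_\alpha^\sh] \propto [[e_j, e_k], e_{\alpha'}^\sh] + [e_k, [e_j, e_{\alpha'}^\sh]].
\]
The first term vanishes because $[e_j, e_k] = 0$ (from $A_{jk} = 0$), and the second vanishes by the inductive hypothesis applied to $\alpha'$, which is valid since $(\alpha_j{}^\crt, \alpha') = (\alpha_j{}^\crt, \alpha) - A_{jk} = (\alpha_j{}^\crt, \alpha) \in \{0, \pm 1\}$.

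The main subtlety is the combinatorial claim ensuring the existence of a suitable $k$ in the support of $\alpha$; this relies crucially on the non-positivity of off-diagonal Cartan matrix entries. A secondary point is that invoking Proposition~\ref{equivariance-prop} presupposes $\fg^-$ is finite-dimensional, consistent with the standing assumptions of this section.
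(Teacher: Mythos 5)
Your overall strategy---reducing the claim to $[e_j,e_\alpha{}^\sh]=0$ via $[e_\alpha,f_{0\mu^\crt}]=(\mu,\alpha)\,e_\alpha{}^\sh$ and then inducting on the height of $\alpha$ using Proposition~\ref{equivariance-prop}---is genuinely different from the paper's proof, which splits into the cases $(\alpha_j{}^\crt,\alpha)=0,1$ (where $[e_j,e_\alpha]=0$, so the Jacobi identity together with (\ref{addrel2}) finishes immediately) and $(\alpha_j{}^\crt,\alpha)=-1$ (where $[e_\alpha,f_{0\Lambda_k{}^\crt}]$ is obtained from $[e_k,f_{0\Lambda_k{}^\crt}]$ by operators commuting with $e_j$, and (\ref{addrel3}) is invoked). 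Your route is viable in outline, and the reduction, the base case, and the observation that negative roots are easy (indeed for $\alpha$ a negative root of $\fg^-$ one has $[e_j,e_\alpha]=0$ automatically, so no induction is needed there) are all fine. But the inductive step has a genuine gap.

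The gap is in the choice of $k$. You prove that the support of $\alpha$ contains \emph{some} $k$ with $A_{jk}=0$, and then for that $k$ you ``decompose $\alpha=\alpha_k+\alpha'$ with $\alpha'$ a root''. For an arbitrary $k$ in the support, $\alpha-\alpha_k$ need not be a root. For instance, take $\fg^-$ of type $A_3$ and $\alpha=\alpha_1+\alpha_2+\alpha_3$: then $\alpha-\alpha_2=\alpha_1+\alpha_3$ is not a root, and if $A_{j1}=-1$, $A_{j2}=A_{j3}=0$ (so $(\alpha_j{}^\crt,\alpha)=-1$ and the hypothesis holds), your argument allows the choice $k=2$, for which the decomposition fails. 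What you actually need is a $k$ with \emph{both} $A_{jk}=0$ and $\alpha-\alpha_k$ a root, i.e.\ you must run your counting argument over the set $S$ of $k$ such that $\alpha-\alpha_k$ is a root, not over the whole support. The required statement does hold: if every $k\in S$ had $A_{jk}\leq-1$ one would get $(\alpha_j{}^\crt,\alpha)\leq\sum_{k\in S}c_kA_{jk}\leq-\sum_{k\in S}c_k$, and one can show that $\sum_{k\in S}c_k\geq2$ for every root of height at least two (either $|S|\geq2$, or $S=\{k_0\}$ forces $c_{k_0}\geq2$; the latter requires a separate argument using $(\alpha,\alpha)=\sum_kc_k(\alpha_k,\alpha)$ and the root-string inequalities, since $(\alpha_k,\alpha)\leq0$ for $k\notin S$). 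This extra step is where the real content of the combinatorial claim lies, and it is missing from your proof as written.
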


\begin{proof} We assume $(\vartheta,\mu)\neq0$; otherwise the lemma follows from (\ref{mulambda}).
If $(\alpha_j{}^\crt,\alpha)=0,1$, then $[e_j,e_\alpha]=0$ and
\begin{align}
[e_j,[e_\alpha,f_{0{\mu^\crt}}]]=[e_\alpha,[e_j,f_{0{\mu^\crt}}]]=0\,,
\end{align}
where the last equality follows from (\ref{addrel2}). If $(\alpha_j{}^\crt,\alpha)=-1$,
then there is a $k\in K$ such that $(\alpha_j{}^\crt,\alpha_k)=-1$
and $[e_\alpha,f_{0{\Lambda_k{}^\crt}}]$ (which is proportional to $[e_\alpha,f_{0{\mu^\crt}}]$)
can be obtained from $[e_k,f_{0{\Lambda_k{}^\crt}}]$ by acting with elements that commute with $e_j$.
Since $[e_j,[e_k,f_{0{\Lambda_k{}^\crt}}]]=0$
by (\ref{addrel3}), it follows that $[e_j,[e_\alpha,f_{0{\Lambda_k{}^\crt}}]]=0$.
\end{proof}

\begin{lemma}\label{lemma2} If $v_\mu$ is a highest weight vector of a weight $\mu$ in a $\fg$-module,
then
\begin{align}
(e_i f_i{}^{\,p}) (v_\mu)= p\big((\alpha_i{}^\crt,\mu)-p+1\big)f_i{}^{\,p-1} (v_\mu) 
\end{align}
for any $i\in I$ and any positive integer $p$.
\end{lemma}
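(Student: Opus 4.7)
The plan is a straightforward induction on $p$, essentially the classical $\mathfrak{sl}_2$ computation, with one preliminary unpacking of notation.

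First, I would translate between the eigenvalue of $h_i$ on $v_\mu$ and the pairing $(\alpha_i^\crt,\mu)$. By the definitions in the preliminaries, $h_i = h_{(\alpha_i{}^\crt)^\crt} = \varphi^{-1}(\alpha_i{}^\crt)$, and for any weight $\nu$ one has $\nu(h_{\sigma^\crt}) = \varphi(h_{\nu^\crt})(h_{\sigma^\crt}) = \kappa(h_{\nu^\crt},h_{\sigma^\crt}) = (\nu,\sigma)$. Applied to $\sigma = \alpha_i{}^\crt$, this gives $\mu(h_i) = (\alpha_i{}^\crt,\mu)$. This identification is the only nontrivial step; everything else is a module calculation inside the Kac--Moody algebra $\fg$, using only the Chevalley relations with $B_{ii} = A_{ii} = 2$.

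For the induction on $p$, the base case $p=1$ is immediate: $e_i f_i (v_\mu) = [e_i,f_i](v_\mu) + f_i e_i(v_\mu) = h_i (v_\mu) = (\alpha_i{}^\crt,\mu)\,v_\mu$, where the last term vanishes because $v_\mu$ is a highest weight vector. For the inductive step I would write
\begin{align*}
e_i f_i^{\,p}(v_\mu) = [e_i,f_i]f_i^{\,p-1}(v_\mu) + f_i\, e_i f_i^{\,p-1}(v_\mu) = h_i f_i^{\,p-1}(v_\mu) + f_i\, e_i f_i^{\,p-1}(v_\mu),
\end{align*}
and use $[h_i,f_i] = -2 f_i$ together with the induction hypothesis. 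Since $f_i^{\,p-1}(v_\mu)$ has weight $\mu - (p-1)\alpha_i$, evaluating $h_i$ on it gives $\big((\alpha_i{}^\crt,\mu) - 2(p-1)\big) f_i^{\,p-1}(v_\mu)$, and the inductive hypothesis contributes $(p-1)\big((\alpha_i{}^\crt,\mu) - p + 2\big) f_i^{\,p-1}(v_\mu)$. Adding these and simplifying yields $p\big((\alpha_i{}^\crt,\mu) - p + 1\big) f_i^{\,p-1}(v_\mu)$, as required.

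There is no real obstacle; the mild bookkeeping point is to keep track of the shifted eigenvalue of $h_i$ on $f_i^{\,p-1}(v_\mu)$, which is what produces the correction $-2(p-1)$ that, combined with the inductive hypothesis, collapses to the claimed coefficient. Alternatively, one can prove the commutator identity $[e_i,f_i^{\,p}] = p f_i^{\,p-1}\big(h_i - (p-1)\big)$ in $U(\fg)$ directly by induction and then apply it to $v_\mu$; this is essentially the same computation packaged differently.
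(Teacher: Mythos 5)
Your proof is correct and takes the same route as the paper, which simply remarks that the lemma is ``a standard result easily proven by induction'' without writing it out. Your induction on $p$, together with the preliminary identification $\mu(h_i)=(\alpha_i{}^\crt,\mu)$ and the weight shift $(\alpha_i{}^\crt,\alpha_i)=A_{ii}=2$, is exactly that standard $\mathfrak{sl}_2$ argument, and the arithmetic checks out.
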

\begin{proof}
This is a standard result easily proven by induction.
\end{proof}

\begin{theorem} \label{tha-struktur}
Suppose that $\fg^-$ is finite-dimensional with a
highest root $\vartheta$ and 
that $(\alpha_i{}^\crt,\vartheta)\geq -1$ and $\lambda_i=0,1$ for any $i\in I$.
Then the $\fg$-module $U(\fg)[e_{\vartheta},f_{0\mu^\crt}]$
is irreducible with highest weight $\vartheta+\lambda$. Furthermore, $W_{-1}$ is the direct sum of all such modules
and the module $U(\fg)(f_{00})$ with highest weight $\lambda$ (which is not present in $S_{-1}$).
\end{theorem}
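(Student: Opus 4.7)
My plan is structured in three stages: identify a highest weight vector of weight $\vartheta+\lambda$ inside $W_{-1}$, prove irreducibility of the submodule it generates, and then assemble the direct-sum decomposition of $W_{-1}$.

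For the first stage, I would fix a weight $\mu$ of $\fg^-$ with $(\mu,\vartheta)\neq 0$ and set $v=[e_\vartheta,f_{0\mu^\crt}]=(\mu,\vartheta)e_\vartheta{}^\sh$. The weight is $\vartheta+\lambda$, immediate from $[h_i,e_\vartheta]=\vartheta(h_i)e_\vartheta$ together with $[h_i,f_{0k}]=-B_{i0}f_{0k}=\lambda_i f_{0k}$. For the highest-weight condition $[e_i,v]=0$ I would split on $i$: for $i\in K$, Proposition~\ref{equivariance-prop} gives $[e_i,e_\vartheta{}^\sh]=[e_i,e_\vartheta]^\sh=0$, since $\vartheta$ is a highest root of $\fg^-$; for $i\in J$, I would invoke Lemma~\ref{lemma4.5} with $\alpha=\vartheta$. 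To check its hypothesis $(\alpha_i{}^\crt,\vartheta)\in\{-1,0,1\}$, I use that $\vartheta=\sum_{k\in K}c_k\alpha_k$ with $c_k\in\mathbb{Z}_{\geq 0}$ and that the off-diagonal Cartan entries are non-positive, so $(\alpha_i{}^\crt,\vartheta)\leq 0$; combined with the hypothesis $(\alpha_i{}^\crt,\vartheta)\geq -1$, this value lies in $\{-1,0\}$.

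For the second stage, the weight $\vartheta+\lambda$ is dominant integral: for $i\in K$, $(\alpha_i{}^\crt,\vartheta+\lambda)=(\alpha_i{}^\crt,\vartheta)\geq 0$, and for $i\in J$, $(\alpha_i{}^\crt,\vartheta+\lambda)=(\alpha_i{}^\crt,\vartheta)+\lambda_i\in\{0,1\}$. By the Gabber--Kac criterion, irreducibility of $U(\fg)v$ with this highest weight reduces to verifying the Serre relations $(\ad f_i)^{1+(\alpha_i{}^\crt,\vartheta+\lambda)}(v)=0$ for every $i\in I$. For $i\in K$ this is handled again by $\sh$-equivariance, transporting the Serre relation $(\ad f_i)^{1+(\alpha_i{}^\crt,\vartheta)}(e_\vartheta)=0$ that holds in the adjoint of $\fg^-$. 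For $i\in J$ the required exponent is only $1$ or $2$, and the relation must be verified by a direct Jacobi expansion of $[f_i,v]=[[f_i,e_\vartheta],f_{0\mu^\crt}]+[e_\vartheta,[f_i,f_{0\mu^\crt}]]$, reducing each piece using Proposition~\ref{flindepprop}, the relation $(\ad f_j)^{1-B_{j0}}(f_{0i})=0$ from (\ref{addrel2}), and the root-string structure of $\fg$. Lemma~\ref{lemma2} then packages the bookkeeping at the critical exponent.

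For the third stage, $W_{-1}$ is generated as a $\fg$-module by its degree-$(-1)$ generators $\{f_{00}\}\cup\{f_{0k}:k\in K\}$. Proposition~\ref{tha-struktur0} identifies $U(\fg)(f_{00})$ with $V(\lambda)$. The remaining piece $\sum_{k\in K}U(\fg)(f_{0k})$ contains $\sh(\fg^-)$ by Proposition~\ref{equivariance-prop}, which splits into its simple summands $\sh(\fg^-_s)$; each contains the highest-weight vector $e_{\vartheta_s}{}^\sh$, and by stage two generates an irreducible $V(\vartheta_s+\lambda)\subset W_{-1}$. Together these exhaust $W_{-1}$, and the sum is direct because the highest weights $\lambda$ and $\vartheta_s+\lambda$ are pairwise distinct. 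The main obstacle I anticipate is the $i\in J$ portion of the Serre check in stage two: none of (\ref{ordchevrel})--(\ref{addrel3}) gives the relevant vanishings on the nose, and extracting them demands a careful recombination of Proposition~\ref{flindepprop}, the quadratic relations involving $f_j$ from (\ref{addrel2}), and the Weyl-transport arguments underpinning Proposition~\ref{equivariance-prop}.
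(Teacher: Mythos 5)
Your stage 1 is sound, and using the $\sh$-equivariance of Proposition~\ref{equivariance-prop} for $i\in K$ is a clean alternative to the paper's device of picking an auxiliary weight $\mu$ with $(\mu,\alpha_i)=0$, $(\mu,\vartheta)>0$. But there are two genuine gaps. First, the $i\in J$ part of your Serre check: the relation $(\ad f_i)^{1+(\alpha_i{}^\crt,\vartheta)+\lambda_i}[e_\vartheta,f_{0\mu^\crt}]=0$ cannot be obtained by ``a direct Jacobi expansion'' of $[f_i,v]$ --- already in the exponent-one case you would need $[e_\vartheta,[f_i,f_{0\mu^\crt}]]=0$, which follows from none of (\ref{ordchevrel})--(\ref{addrel3}). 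The paper's argument is a descent: the Leibniz rule applied to $(\ad f_i)^{1+(\alpha_i{}^\crt,\vartheta)}(e_\vartheta)=0$ and $[f_i,[f_i,f_{0\mu^\crt}]]=0$ gives the vanishing at the \emph{higher} exponent $2+(\alpha_i{}^\crt,\vartheta)+\lambda_i$; one then applies $\ad e_i$, kills one term via $[e_i,[e_\vartheta,f_{0\mu^\crt}]]=0$ and Lemma~\ref{lemma2}, and divides the other by the eigenvalue $2+(\alpha_i{}^\crt,\vartheta)+\lambda_i\geq1$ to descend one power. This is exactly where the hypothesis $(\alpha_i{}^\crt,\vartheta)\geq-1$ does its real work; in your sketch that hypothesis only appears in validating Lemma~\ref{lemma4.5}, and the step you yourself flag as the main obstacle is the one left unproved.

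Second, and more seriously, stage 3 starts from the unjustified assertion that $W_{-1}$ is generated as a $\fg$-module by $\{f_{00}\}\cup\{f_{0k}\}$. A priori $W_{-1}=U(W_0)\big(\langle f_{0i}\rangle\big)$ and $W_0\supseteq[W_1,W_{-1}]$, so $W_0$ could be strictly larger than $\scr B_0=\fg\oplus\langle h_0\rangle$ --- and it genuinely is larger when $\fg$ is infinite-dimensional, which is precisely the ``extra module'' phenomenon recalled in Section~\ref{tha-subsection}. The paper closes this loop by computing $\big(e_0\,U(\fg)\big)[e_\vartheta,f_{0\mu^\crt}]=U(\mathfrak n)(h_{\mu^\crt})\subseteq\fg$, concluding that $W_0=\scr B_0$ and only then that nothing beyond the stated direct sum can occur at degree $-1$. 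Without some version of this computation your exhaustion claim does not follow.
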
 

\begin{proof}  
We will show that
\begin{align}
[e_i,[e_{\vartheta},f_{0\mu^\crt}]]&=0\,,&
(\ad{f_i})^{1+(\alpha_i{}^\crt,\,\vartheta-\alpha_0)}[e_{\vartheta},f_{0\mu^\crt}]&=0\,\label{embtensor}
\end{align}
for any $i\in I=J\cup K$.

We first show the left equality.
When $i\in K$, we assume that $\alpha_i \neq \vartheta$; otherwise the equality
follows from (\ref{addrel4}). Then, as in the proof of Proposition~\ref{equivariance-prop}, we can choose
\begin{align}
\mu = \vartheta -\frac{(\alpha_i,\vartheta)}{(\alpha_i,\alpha_i)}
\end{align}
so that $(\mu,\alpha_i)=0$ but $(\mu,\vartheta)>0$. Hence
\begin{align}
[e_i,[e_{\vartheta},f_{0\mu^\crt}]]=[[e_i,e_{\vartheta}],f_{0\mu^\crt}]=0\,.
\end{align}
When $i\in J$, it follows from Lemma~\ref{lemma4.5} that $[e_i,[e_\vartheta,f_{0\mu^\crt}]]=0$.

We now turn to the right equality in (\ref{embtensor}). We have
$(\ad{f_i})^{1+(\alpha_i{}^\crt,\,\vartheta)}(e_\vartheta)=0$
and since $(\alpha_i{}^\crt,\alpha_0)\leq0$, also
$(\ad{f_i})^{1+(\alpha_i{}^\crt,\,\vartheta-\alpha_0)}(e_\vartheta)=0$.
Together with $[f_i,[f_i,f_{0\mu^\crt}]]=0$ (which holds since $\lambda_i=0,1$) we get
\begin{align}
(\ad{f_i})^{2+(\alpha_i{}^\crt,\,\vartheta-\alpha_0)}[{e_\vartheta},f_{0\mu^\crt}]=0\,.
\end{align}
Acting with $e_i$ gives
\begin{align}
0&=(\ad{e_i})(\ad{f_i})^{2+(\alpha_i{}^\crt,\,\vartheta-\alpha_0)}[{e_\vartheta},f_{0\mu^\crt}]\nn\\
&=(\ad{h_i})(\ad{f_i})^{1+(\alpha_i{}^\crt,\,\vartheta-\alpha_0)}[{e_\vartheta},f_{0\mu^\crt}]\nn\\
&\quad\,+(\ad{f_i})(\ad{e_i})(\ad{f_i})^{1+(\alpha_i{}^\crt,\,\vartheta-\alpha_0)}[{e_\vartheta},f_{0\mu^\crt}]\,.
\end{align}
Since $[e_i,[e_\vartheta,f_{0\mu^\crt}]]=0$ we may now apply Lemma~\ref{lemma2} with $p=1+(\alpha_i{}^\crt,\,\vartheta-\alpha_0)$,
from which it follows that the last line is zero.
The first term on the right hand side equals
\begin{align}
-\big(2+(\alpha_i{}^\crt,\vartheta)-(\alpha_i{}^\crt,\alpha_0)\big)(\ad{f_i})^{1+(\alpha_i{}^\crt,\,\vartheta-\alpha_0)}[{e_\vartheta},f_{0\mu^\crt}]\,.
\end{align}
Since $(\alpha_i{}^\crt,\vartheta)\geq-1$ according to the assumption, and $-(\alpha_i{}^\crt,\alpha_0)\geq0$, we have
\begin{align}
2+(\alpha_i{}^\crt,\vartheta)-(\alpha_i{}^\crt,\alpha_0)\geq 1
\end{align}
and it follows that $(\ad{f_i})^{1+(\alpha_i{}^\crt,\,\vartheta-\alpha_0)}[{e_\vartheta},f_{0\mu^\crt}]=0$.

Since they are irreducible, the direct sum of all modules $U(\fg)[e_{\vartheta},f_{0\mu^\crt}]$ and $U(\fg)(f_{00})$
is direct, and since
\begin{align}
[f_\vartheta,[e_\vartheta,f_{0\mu^\crt}]]=(\mu,\vartheta)f_{0\vartheta^\crt}
\end{align}
by (\ref{alphaminusaplha}), it contains all the generators $f_{0i}$ for $i \in \{0\}\cup I$.
Furthermore, 
$U(\fg)[e_{\vartheta},f_{0\mu^\crt}]$ is equal to $U(\mathfrak{n})[e_{\vartheta},f_{0\mu^\crt}]$,
where $\mathfrak{n}$ is the subalgebra of $\fg$ spanned by $f_i$ for $i\in I$. All these generators commute with $e_0$, so that
\begin{align}
\big({e_0\,}U(\mathfrak{g})\big)[e_{\vartheta},f_{0\mu^\crt}]=\big({e_0\,}U(\mathfrak{n})\big)[e_{\vartheta},f_{0\mu^\crt}]=
U(\mathfrak{n})[{e_0},[e_{\vartheta},f_{0\mu^\crt}]]= U(\mathfrak{n})(h_{\mu^\crt})\,.
\end{align}
It follows that $W_0=\scr B_0$, and in turn that we cannot get anything more in $W_{-1}$ than this direct sum.
\end{proof}

\section{From $\scr B$ to $\scr B^W$} \label{cartanification-section}

The local part $\scrB^\LLoc$ of the contragredient Lie superalgebra $\scrB$, together with the restriction of
$\langle-|-\rangle$
to $\scrB_{-1}\times\scrB_1$
is a special case of more general structure, which we now define.
For any element $x$ in a $\mathbb{Z}$-graded Lie superalgebra or a local Lie superalgebra 
we write $x_k$ for the component of $x$ at degree $k$.

\begin{defi}
A local Lie superalgebra $\scrG^\LLoc=\scrG_{-1}\oplus\scrG_{0}\oplus\scrG_{1}$ with a bracket $\dlb -,-\drb$ and in addition equipped with a
bilinear map
\begin{align}
\mathscr G_{-1} \times \mathscr G_{1} \to \mathbb{K}\,, \qquad (x,y) \mapsto \langle x | y \rangle\,,
\end{align}
is a \textbf{contragredient local Lie superalgebra} if 
\begin{itemize}
\item it contains a \textbf{grading element}: an element $L\in\scrG_0$ such that $\dlb L, x_k \drb= kx_k$ for any $x\in\scrG^\LLoc$,
\item the bilinear map $\langle - | - \rangle$ is invariant in the sense
that $\langle \dlb x_{-1},y_0 \drb|z_1 \rangle = \langle x_{-1} | \dlb y_0,z_1\drb \rangle$
for any $x,y,z \in \scrG^\LLoc$, and
\item the bilinear map $\langle - | - \rangle$ is homogeneous with respect to the $\mathbb{Z}_2$-grading, meaning that a
pair of homogeneous elements with different $\mathbb{Z}_2$-degrees is mapped to zero.
\end{itemize}
\qed
\end{defi}

\noindent
It is convenient to also define a corresponding bilinear map
\begin{align}
\mathscr G_{1} \times \mathscr G_{-1} \to \mathbb{K}\,, \qquad (x,y) \mapsto \langle x | y \rangle = (-1)^{xy}\langle y | x \rangle\,
\end{align}
by $\mathbb{Z}_2$-graded symmetry, where $x$ and $y$ in the exponent of $(-1)$ actually mean the $\mathbb{Z}_2$-degrees of these elements.

The local part of
$\scr B$ defined 
from a triple $(\fg,\lambda,\kappa)$ as
in Section~\ref{Contragredient Lie superalgebras}
is a contragredient local Lie superalgebra 
with grading element $L=\sum_{i=0}^r(B^{-1})_{0i}h_i$ and the map $\langle-|-\rangle$ induced by $\kappa$
\cite[Proposition~3.2]{Cederwall:2022oyb}.
(Note that different choices of $\kappa$ give rise to isomorphic contragredient
Lie superalgebras $\scrB$, but their local parts, considered as contragredient local Lie superalgebras,
are really different.)
Since this is the case that we will be mainly interested in, we have chosen the term
``contragredient local Lie superalgebra'' in lack of a better one, although it might be misleading since
the concept is more general. (In fact, any
local Lie superalgebra can be made contragredient by letting $\langle - | - \rangle$ be trivial and
extending it by hand with an additional basis element $L$, if no such element is 
already present.)

\subsection{Underlying algebra with restricted associativity} \label{Local algebras}

To any contragredient local Lie superalgebra $\scrG^\LLoc=\scrG_{-1}\oplus\scrG_{0}\oplus\scrG_{1}$,
we associate an algebra $\scrG^\glob$ in the following way. 
Let $\scrG^\glob{}_{0\pm}$ be the enveloping algebras of the Lie superalgebras $\scr G_0 \oplus F(\scrG_{\pm1})$,
where $F(\scrG_{\pm1})$ is the free
Lie superalgebra generated by $\scrG_{\pm1}$ and the adjoint action of $\scrG_{0}$ on $F(\scrG_{\pm1})$ is induced by the 
adjoint action of $\scrG_{0}$ on $\scrG_{\pm1}$ in $\scrG^\LLoc$ and the Jacobi identity. Thus in particular $\scrG^\glob{}_0=U(\scrG_0)$,
and we can consider
the tensor algebras $T(\scrG_{\pm1})=U(F(\scrG_{\pm1}))$ as subalgebras of $\scrG^\glob{}_{0\pm}$. It is then easy to see that
\begin{align}
\scrG^\glob{}_{0\pm}=U(\scrG_0)T(\scrG_{\pm1})=T(\scrG_{\pm1})U(\scrG_0)\,.
\end{align}
We now unify the two $\mathbb{Z}$-graded algebras $\scrG^\glob{}_{0\pm}$ into the direct sum vector space
\begin{align}
\scrG^\glob = \scrG^\glob{}_{-}\oplus\scrG^\glob{}_0 \oplus\scrG^\glob{}_+\,.
\end{align}
So far, $\scrG^\glob{}$ is not an algebra, but a $\mathbb{Z}$-graded vector space with a product defined for any pair of elements
which are either both in $\scrG^\glob{}_{0+}$ or both in $\scrG^\glob{}_{0-}$.
We extend it to a product defined for any pair recursively by
\begin{align}
(Xx_{\pm1})(y_{\mp1}Y)=(X(x_{\pm1}y_{\mp1}))Y\, \label{prod-def}
\end{align}
for $X \in \scrG^\glob{}_{0\pm}$ and $Y\in \scrG^\glob{}_{0\mp}$,
where $x_{\pm1}y_{\mp1}$ is given by
\begin{align}
x_{-1}y_{1} &= - \dlb y_{1},x_{-1}\drb + \langle x_{-1}|y_{1}\rangle L\,,\nn\\
x_{1}y_{-1} &= \dlb y_{-1},x_{1}\drb+\langle x_{1}|y_{-1}\rangle L +\langle x_{1}|y_{-1}\rangle\,.\label{xyplusminus}
\end{align}

\begin{rmk}Compared to \cite{Cederwall:2022oyb}, we have interchanged the arguments in the bracket which is the first term on the right hand side
of both equations (\ref{xyplusminus}). This does not change anything
when the $\mathbb{Z}$-grading is consistent, so that $x$ and $y$ are odd and the bracket is symmetric, but 
in the case of an even Lie superalgebra, this choice will turn out to go better with our other conventions.\qed
\end{rmk}

The reason for denoting the bracket by $\dlb-,-\drb$ is that we reserve $[-,-]$ for the commutator
$[x,y]=xy-(-1)^{yx}yx$ with respect to the product in $\scrG^\glob$. It is
equal to the original bracket in the following cases,
\begin{align} \label{kommutatorlika}
[x_0,y_0]&=\dlb x_0,y_0 \drb \,,& [x_0,y_{\pm1}]&=\dlb x_0,y_{\pm1}\drb \,, & [x_{\pm1},y_0]&=\dlb x_{\pm1},y_0\drb\,,
\end{align}
but not when $x \in \scrG_{\pm1}$ and $y \in \scrG_{\mp1}$. In this case we instead get
$[x_1,y_{-1}]=\langle x_1|y_{-1} \rangle$,
and it is thus important to distinguish between $[-,-]$ and $\dlb-,-\drb$. In the cases where the two brackets agree, we will
use $[-,-]$ for simplicity.

We will now see that the algebra $\scrG^\glob$ is not really associative, but almost.

\begin{prop} \label{XzY-prop}
The associative law
$(Xz)Y=X(zY)$
holds in $\scrG^\glob$ for any
\begin{align}
X &\in \scrG^\glob{}_{0\pm}=U(\scrG_0)T(\scrG_{\pm1})\,, 
& Y &\in \scrG^\glob{}_{0\mp}=T(\scrG_{\mp1})U(\scrG_0)\,, & z &\in \scrG^\LLoc \,.
\end{align}
\end{prop}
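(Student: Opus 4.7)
The plan is to proceed by induction on $\ell(X) + \ell(Y)$, where $\ell(W)$ denotes the number of $\scrG_{\pm 1}$-letters in a PBW-monomial representative of $W \in \scrG^\glob_{0\pm} = U(\scrG_0) T(\scrG_{\pm 1})$. By symmetry I assume $X \in \scrG^\glob_{0+}$ and $Y \in \scrG^\glob_{0-}$, and split into cases according to the $\mathbb{Z}$-degree of $z \in \scrG^\LLoc$. The main tool is to use (\ref{prod-def}) as a rewriting rule that peels off one $\scrG_{\pm 1}$-letter at a time, producing via (\ref{xyplusminus}) an element of $\scrG_0 \oplus \mathbb{K} L \oplus \mathbb{K}\subset U(\scrG_0)$ that can be absorbed into the remaining factors.

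In the base case $\ell(X) = \ell(Y) = 0$, both $X$ and $Y$ lie in $U(\scrG_0)$, which embeds as a common subalgebra of $\scrG^\glob_{0+}$ and $\scrG^\glob_{0-}$. Each of the four relevant products then sits inside one of these associative enveloping algebras (the choice depending on the degree of $z$), and the identity follows from ordinary associativity there.

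For the inductive step, assume $\ell(X) > 0$ and write $X = X' x_1$ with $x_1 \in \scrG_1$, $\ell(X') = \ell(X) - 1$. The cases $z \in \scrG_{\pm 1}$ are handled almost directly by (\ref{prod-def}) combined with the inductive hypothesis: when $z \in \scrG_{-1}$, both $(Xz)Y$ and $X(zY)$ unfold to $(X'(x_1 z))Y$ via (\ref{prod-def}), with $x_1 z \in \scrG_0 \oplus \mathbb{K}$ by (\ref{xyplusminus}), and the inductive hypothesis applied to the shorter triple $(X', z', Y)$ with $z' \in \scrG_0$ closes the argument after isolating the scalar contribution; the case $z \in \scrG_1$ is analogous after first peeling off a leading $y_{-1}$-letter of $Y$ on both sides via (\ref{prod-def}).

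The main obstacle is the case $z \in \scrG_0$, where neither side visibly shortens. Here the commutation $x_1 z = \dlb x_1, z\drb + (-1)^{x_1 z} z x_1$ inside $\scrG^\glob_{0+}$ splits $Xz$ into a piece $X' \dlb x_1, z\drb$, to which the induction applies with $z' = \dlb x_1, z\drb \in \scrG_1$, and a residual piece $(-1)^{x_1 z} (X' z) x_1$ of unchanged total length; for the latter one applies (\ref{prod-def}) to move the trailing $x_1$ past $Y$ in $\scrG^\glob_{0-}$, and matches with the symmetric expansion of $X(zY)$ obtained by commuting $z$ past the leading $y_{-1}$ of $Y$. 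The two resulting sums agree term by term by virtue of the super-Jacobi identity for $\dlb-,-\drb$ in $\scrG^\LLoc$, the invariance of $\langle-|-\rangle$, and the fact that $L$ commutes with $\scrG_0$; the principal technical difficulty is the careful bookkeeping of the $L$- and scalar-contributions produced by (\ref{xyplusminus}), which pair up correctly on the two sides thanks to these three identities.
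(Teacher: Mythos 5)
Your proposal is correct and takes essentially the same route as the paper's proof: induction on the number of degree-$\pm1$ letters (the paper uses the total filtration degree of $X'$ and $Y'$, which effects the same reduction), with the cases $z\in\scrG_{\pm1}$ reduced to the case $z\in\scrG_0$ via (\ref{prod-def}), and the case $z\in\scrG_0$ settled by commuting $z$ past the boundary letters $x_1$, $y_{-1}$ and matching terms. The term-by-term matching you defer at the end is exactly the paper's displayed base-case computation, and the three ingredients you name --- the Jacobi identity for $\dlb-,-\drb$, the invariance of $\langle-|-\rangle$, and $[L,\scrG_0]=0$ --- are precisely the ones it uses.
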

\begin{proof} This is obvious when $X$ or $Y$ belongs to $\scrG^\glob{}_0=U(\scrG_0)$, so we
assume
$X=X'x_{\pm1}$ and $Y=y_{\mp1}Y'$ where 
\begin{align}
X' &\in U_m(\scrG_0) T_p(\scrG_{\pm1})\,, & Y' &\in T_q(\scrG_{\mp1})U_n(\scrG_0) 
\end{align}  
for some non-negative integers $m,n,p,q$, 
referring to the natural filtrations of $U(\scrG_0)$ and $T(\scrG_{\pm1})$.
In the case $z=z_0$, and where the local Lie superalgebra $\scrG^\LLoc$ is even, we then proceed by induction over $m+n+p+q$.
In the base case $X'=Y'=1$ we have
\begin{align}
(x_{\pm1}z_0)y_{\mp1}&=[x_{\pm1},z_0]y_{\mp1}+(z_0x_{\pm1})y_{\mp1}=[x_{\pm1},z_0]y_{\mp1}+z_0(x_{\pm1}y_{\mp1})\nn\\
&=\pm\dlb y_{\mp1},[x_{\pm1},z_0]\drb+\langle [x_{\pm1},z_0]|y_{\mp1}\rangle L +\tfrac{1\pm1}2\langle [x_{\pm1},z_0]|y_{\mp1}\rangle\nn\\
&\quad\,\pm z_0\dlb y_{\mp1},x_{\pm1}\drb+\langle x_{\pm1}|y_{\mp1}\rangle z_0 L +\tfrac{1\pm1}2\langle x_{\pm1}|y_{\mp1}\rangle z_0\nn\\
&= \pm[ \dlb y_{\mp1},x_{\pm1}\drb, z_0]\mp \dlb [ y_{\mp1},z_0], x_{\pm1}\drb+\langle x_{\pm1}|[z_0,y_{\mp1}]\rangle L 
+\tfrac{1\pm1}2\langle x_{\pm1}|[z_0,y_{\mp1}]\rangle  \nn\\
&\quad\,\pm z_0 \dlb y_{\mp1},x_{\pm1}\drb +\langle x_{\pm1}| y_{\mp1}\rangle Lz_0 +\tfrac{1\pm1}2\langle x_{\pm1}|y_{\mp1}\rangle z_0\nn\\
&= \pm\dlb [z_0,y_{\mp1}],x_{\pm1}\drb+\langle x_{\pm1}|[z_0,y_{\mp1}]\rangle L 
+\tfrac{1\pm1}2\langle x_{\pm1}|[z_0,y_{\mp1}]\rangle  \nn\\
&\quad\,\pm\dlb y_{\mp1},x_{\pm1}\drb z_0+\langle x_{\pm1}| y_{\mp1}\rangle Lz_0 +\tfrac{1\pm1}2\langle x_{\pm1}|y_{\mp1}\rangle z_0\nn\\
&=x_{\pm1}[z_0,y_{\mp1}]+(x_{\pm1}y_{\mp1})z_0\nn\\&=x_{\pm1}[z_0,y_{\mp1}]+x_{\pm1}(y_{\mp1}z_0)=x_{\pm1}(z_0y_{\mp1})\, \label{310}
\end{align}
and in the induction step
\begin{align}
(X'x_1z_0)(y_{-1}Y')&=X'\big([x_1,z_0]y_{-1}\big)Y'+X'(z_0x_1y_{-1})Y'\nn\\
&
=X'(x_1z_0y_{-1})Y'=X'\big(x_1[z_0,y_{-1}]\big)Y'+X'(x_1y_{-1}z_0)Y'\nn\\
&=(X'x_1)\big([z_0,y_{-1}]Y'\big)+(X'x_1)(y_{-1}z_0Y')=(X'x_1)(z_0y_{-1}Y')\,. \label{311}
\end{align}
When $\scr{G}^\LLoc$ has a nontrivial odd subspace (in particular when the $\mathbb{Z}$-grading is consistent, so that the odd subspace is 
$\scrG_{-1}\oplus \scrG_{1}$), the appropriate factors of $-1$ are easily inserted in (\ref{310}) and (\ref{311}) by the sign rule.

The case $z=z_{\pm1}$ then follows from the case $z=z_0$ by 
\begin{align}
(Xz_{\pm1})Y&=(Xz_{\pm1})(y_{\mp1}Y')=\big(X(z_{\pm1}y_{\mp1})\big)Y'\nn\\
&=X\big((z_{\pm1}y_{\mp1})Y'\big)=X\big(z_{\pm1}(y_{\mp1}Y')\big)=X(z_{\pm1}Y)
\end{align}
and the case $z=z_{\mp1}$ directly by
\begin{align}
(Xz_{\mp1})Y=\big((X'x_{\pm1})z_{\mp1}\big)Y=(X'x_{\pm1})(z_{\mp1}Y)=X(z_{\mp1}Y)\,.
\end{align}
\end{proof}

\begin{cor}
The associative law
$(XZ)Y=X(ZY)$
holds in $\scrG^\glob$ when $X,Z \in \scrG^\glob{}_{0\pm}$ or $Y,Z \in\scrG^\glob{}_{0\pm}$.
\end{cor}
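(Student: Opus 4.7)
The plan is to prove the corollary by induction on the length of $Z$ as a monomial in the enveloping algebra $\scrG^\glob{}_{0\pm} = U(\scrG_0)T(\scrG_{\pm1})$, with Proposition~\ref{XzY-prop} serving as the single-element base step. I will focus on the case $X, Z \in \scrG^\glob{}_{0+}$ and $Y \in \scrG^\glob$ arbitrary; the case $X, Z \in \scrG^\glob{}_{0-}$ is entirely symmetric, and the second part of the statement, $Y, Z \in \scrG^\glob{}_{0\pm}$ with $X$ arbitrary, is handled in the same spirit after decomposing $X$ rather than $Y$.

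Before the induction, I would split off the easy part. Decomposing $Y = Y_+ + Y_0 + Y_-$ according to $\scrG^\glob = \scrG^\glob{}_+ \oplus \scrG^\glob{}_0 \oplus \scrG^\glob{}_-$, the contribution from $Y_0 + Y_+ \in \scrG^\glob{}_{0+}$ gives no trouble: all the products appearing in $(XZ)(Y_0+Y_+)$ and $X(Z(Y_0+Y_+))$ live in the associative enveloping algebra $\scrG^\glob{}_{0+}$, so the identity holds by ordinary associativity there. One is thus reduced to the case $Y \in \scrG^\glob{}_- \subset \scrG^\glob{}_{0-}$, which is exactly the setting in which $X$ and $Y$ lie in opposite halves and Proposition~\ref{XzY-prop} becomes available.

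For $Z$ of length $n \geq 2$, I would write $Z = Z'z$ with $Z' \in \scrG^\glob{}_{0+}$ of length $n-1$ and $z \in \scrG_0 \cup \scrG_{+1} \subset \scrG^\LLoc$. The key computation is then the chain
\begin{align*}
(XZ)Y \;=\; \bigl((XZ')z\bigr)Y \;=\; (XZ')(zY) \;=\; X\bigl(Z'(zY)\bigr) \;=\; X\bigl((Z'z)Y\bigr) \;=\; X(ZY),
\end{align*}
in which the outer equalities use $Z = Z'z$ together with associativity inside $\scrG^\glob{}_{0+}$, the second and fourth are Proposition~\ref{XzY-prop} applied with $XZ'$ and $Z'$ respectively in $\scrG^\glob{}_{0+}$ and $Y \in \scrG^\glob{}_{0-}$, and the middle equality is the induction hypothesis on $Z'$, now with $zY \in \scrG^\glob$ in the role of the arbitrary right-hand factor. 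I do not anticipate any genuine obstacle beyond having Proposition~\ref{XzY-prop} in hand; the only delicate point is bookkeeping---keeping straight which products are computed inside one of the associative enveloping algebras $\scrG^\glob{}_{0\pm}$ and which are defined by the recursive rule~\eqref{prod-def}, since only the latter truly require the proposition to move a factor across.
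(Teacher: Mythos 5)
Your proposal is correct and takes essentially the same route as the paper: the paper's proof is precisely to write $Z$ as a (non-commutative) polynomial in elements of $\scrG^\LLoc$ and use Proposition~\ref{XzY-prop} to move one factor at a time from one pair of parentheses to the other, which is exactly your induction on the length of $Z$ made explicit. Your preliminary reduction of the remaining factor to the opposite half $\scrG^\glob{}_{0\mp}$ (so that the proposition applies) is a piece of bookkeeping the paper leaves implicit, not a different argument.
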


\begin{proof}
This follows
by writing $Z$ as a (non-commutative) polynomial of elements in $\scrG^\LLoc$ 
and using Proposition~\ref{XzY-prop} to move
one element at the time from one pair of parentheses to  the other. 
\end{proof}

\begin{rmk}
\textit{A posteriori}, the outer pair of parentheses on the right hand side of (\ref{prod-def}) is redundant.
\end{rmk}

\subsection{The cartanification} \label{cartansubsec}

We define $\scrG^\loc$ as the local part of $\scrG^\glob$.
Thus $\scrG^\loc=\scrG^\loc{}_{-1}\oplus\scrG^\loc{}_{0}\oplus\scrG^\loc{}_{1}$, where
\begin{align}
\scrG^\loc{}_{-1}&= \scrG_{-1} U(\scrG_0)\,, &
\scrG^\loc{}_1&=\scrG_1 U(\scrG_0)\,, &
\scrG^\loc{}_0&=U(\scrG_0)\,.
\end{align}
As noted in \cite{Cederwall:2022oyb}, although $\scrG^\glob$ is not fully associative,
it is sufficiently associative for 
the vector space $\scrG^\loc$ to be a local Lie superalgebra with the commutator $[-,-]$ as the bracket, and as such we denote it by
$\scrG^\Loc$. 
From its subalgebra generated by $\scrG_{-1}\scrG_0$ and $\scrG_1$, we factor out the maximal ideal intersecting $\scrG^\loc{}_0$ trivially
(the maximal \textbf{peripheral} ideal).
We call the resulting local Lie superalgebra the \textbf{local cartanification} of $\scrG^\LLoc$
and the minimal $\mathbb{Z}$-graded
Lie superalgebra with this local part the \textbf{cartanification} of $\scrG^\LLoc$
(or a cartanification of any $\mathbb{Z}$-graded Lie algebra with local part $\scrG^\LLoc$).
We denote the latter by $\scrG^{W}$.

We will in the rest of the paper only consider 
cartanifications of contragredient local Lie superalgebras $\scrB$ which are given by a triple $(\fg,\lambda,\kappa)$
as described in Section~\ref{Contragredient Lie superalgebras}, with a consistent $\mathbb{Z}$-grading.

\begin{ex}\label{W-example}
Let us study the case $(\fg,\lambda,\kappa)=(A_r,\Lambda_1,\kappa_0)$. Set $n=r+1$ so that $A_r=\sl(n)$.
Then $\scrB=A(0,r)=\sl(n|1)$ with a consistent $\mathbb{Z}$-grading $\scrB=\scrB_{-1}\oplus\scrB_{0}\oplus\scrB_{1}$,
where $\scrB_{0}=\gl(n)$ and $\scrB_{\pm1}$ are fundamental $n$-dimensional $\gl(n)$-modules, dual to each other.
We introduce bases of these subspaces as follows,
\begin{align}
\scr B_{-1} &=\langle F^a \rangle\,, & \scr B_0 &=\langle K^a{}_b \rangle\,, & \scr B_1 &=\langle E_a \rangle\,,
\end{align}
with brackets
\begin{align}
\dlb E_a , F^b \drb &=-K^a{}_b+\delta_a{}^b K\,, 
& \dlb K^a{}_b , E_c\drb &=-\delta_c{}^a E_b\,, 
& \dlb K^a{}_b , F^c\drb &=\delta_b{}^c F^a\,,
\end{align}
where $a,b,\ldots=1,2,\ldots,n$ and $K=K^1{}_1 +\cdots+K^n{}_n=-L$.
The bilinear form is given by the Kronecker delta, $\langle E_a | F^b \rangle =\delta_a{}^b$.
In $\scrB^{\,\glob}$, we then get
\begin{align}
F^a E_b = -\dlb F^a, E_b\drb - \langle E_b | F^a \rangle L = K^a{}_b-\delta_a{}^b K+\delta_a{}^b K=K^a{}_b\,
\end{align}
by (\ref{xyplusminus}).
In the cartanification $\scrB^W$, we have to set to zero all linear combinations of elements $F^aK^b{}_c$ in $(\scrB^{\,\glob})_{-1}$
which have zero bracket with all elements in $(\scrB^{\,\glob})_{1}$. Since
\begin{align}
[F^aK^b{}_c, E_d]=F^a[K^b{}_c, E_d]+[F^a, E_d]K^b{}_c=-\de_d{}^bK^a{}_c+\de_d{}^aK^b{}_c\,,
\end{align}
these linear combinations are the ones that are symmetric in the upper indices, and only the antisymmetric ones survive.
It is then straightforward to extend the local Lie superalgebra to the minimal $\mathbb{Z}$-graded Lie superalgebra with this local part,
the cartanification $\scrB^W$,
and find that it is the Lie superalgebra of Cartan type $W(n)$. 
\qed
\end{ex}

\begin{rmk}\label{cliffrmk} Computing the associator $(F^a E_b)F^c-F^a (E_bF^c)$ in $\scrB^\glob$ in the previous example, we get
\begin{align}
(F^a E_b)F^c - F^a (E_bF^c) &= [K^a{}_b,F^c]+F^c(F^a E_b)-
F^a[E_b,F^c]+F^a (F^cE_b)\nn\\
&=F^c(F^a E_b)+F^a (F^cE_b)=(F^cF^a+F^aF^c)E_b\,,
\end{align}
which suggests that we may factor out the ideal generated by $[F^a,F^b]=[E_a,E_b]=0$\linebreak and be left with the associative
Clifford algebra generated by $F^a$ and $E_a$ modulo these relations and $[E_a,F^b]=\delta_a{}^b$.
The Lie superalgebra $W(n)$ is the subalgebra of the commutator algebra spanned by elements of the form $xE_a$, where $x$
belongs to the Grassmann subalgebra generated by $F^a$.
\qed
\end{rmk}

\noindent
By replacing $\scrG_{-1}\scrG_0$
in the definition of the Cartanification above
by $\scrG_{-1}\scrG_0{}'$ for some subalgebra $\scrG_0{}'$ of $\scrG_0$, we get a \textbf{restricted cartanification}.
In particular, with $\scrG_0{}'=\fg^-$
(for $\scrG=\scrB$)
we get a restricted cartanification that we call the \textbf{strong cartanification}
and denote by $\scrB^S$ (the original $\scrB^W$ being the \textbf{weak cartanification}). 

\begin{ex} In the same case as in Example~\ref{W-example} and Remark~\ref{cliffrmk}, we have $\scrB^S=S(n)$, which is the subalgebra of $W(n)$
spanned by elements $xE_a$ such that $[x,E_a]=0$.
\qed
\end{ex}

\subsection{Structure of $(\scrB^W)_{-1}$} \label{sec3.3}

We will in now see that, under certain conditions, the structure of $(\scrB^W)_{-1}$ as a $\fg$-module 
coincides with the one of $W_{-1}$, described in Section~\ref{tha-structure}. This suggests that 
the algebras $\scrB^W$ and $W$ are isomorphic under these conditions, 
and in Section~\ref{isomorphism-section} we will then see that this indeed is the case.

For any weight $\mu=\sum_{i=1}^r \mu_i \Lambda_i$ of $\fg$, set
\begin{align}
\mu^\irt&=\sum_{i=1}^r \mu_i{}^\irt \Lambda_i=
\sum_{i=1}^r {\epsilon_i}\mu_i \Lambda_i\,.
\end{align}
\begin{defi}
A weight $\mu$ of $\fg$ is \textbf{pseudo-minuscule} if it is a dominant integral weight and $(\mu^\irt,\alpha)=0,1$ for any
root $\alpha$ of $\fg$.\qed
\end{defi}

\noindent
The definition implies that $\fg$ is finite-dimensional.
It is slightly more general than the one in \cite{Cederwall:2022oyb},
not only since we now allow $\fg$ to be non-simple (but still semisimple), but also
since we now consider the zero weight as pseudo-minuscule. A nonzero pseudo-minuscule weight $\mu$ can also be characterised by
$(\mu^\irt,\theta)=1$, where $\theta$ is a highest weight of $\fg$. If $\fg$ is simple, this means that
$\mu$ is a fundamental weight such that the corresponding Coxeter label is equal to $1$ (see \cite{Cederwall:2022oyb} for a list). From the point of view
of extended geometry, these cases are characterised by the absense of so-called ancillary transformations \cite{Cederwall:2017fjm}.

Since $(\lambda^\tri)^\irt=\lambda$, the weight $\lambda^\tri$ is pseudo-minuscule if and only if it is a dominant integral weight and
$(\lambda,\beta)=0,1$
for any root $\beta$ of $\fg$.
In the extension to $\scrB$ defined by $\lambda$, this condition is furthermore equivalent to $(\alpha_0{}^\crt,\beta)=0,1$
for any root $\alpha$ of $\fg$.
It then follows that
\begin{align} \label{prop3.3}
\big((\alpha_0{}^\crt\!,\alpha)+1\big)\dlb f_0,e_\alpha \drb =0
\end{align}
for any root $\alpha \neq \alpha_0$ of $\scr B$ with corresponding root vector $e_\alpha \in \scr B_1$ \cite[Proposition~4.1]{Cederwall:2022oyb}.

We will now study the structure of $(\scrB^W)_{-1}$ when $\lambda^\tri$ is a pseudo-minuscule weight.

\begin{prop} \label{sistalemmat}
If $\lambda^\tri$ is pseudo-minuscule, then
\begin{align}
f_0(h_0+L)&=0\,,
& f_0e_\beta&=0\,\label{relationsinhatW3}
\end{align}
in $\scrB^{W}$
for any root $\beta$ of $\fg$ such that 
$(\lambda,\beta)=1$. In particular, $f_0e_j=0$ for any $j\in J$.
\end{prop}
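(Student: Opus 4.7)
The plan is to show that the two elements $f_0(h_0+L)$ and $f_0 e_\beta$ lie in the maximal peripheral ideal of the subalgebra of $\scrG^\Loc$ from which $\scrB^W$ is built, hence vanish in $\scrB^W$. Since the subspace
$\{x\in\scrG^\loc_{-1} : [x,\scrG^\loc_1]=0 \text{ in } U(\scrG_0)\}$
is $\scrG_0$-stable by the super-Jacobi identity and therefore gives such a peripheral ideal, it is enough to verify that the commutator of each element with every element of $\scrG^\loc_1$ is zero in $U(\scrG_0)$. Using that $\scrG^\loc_1=\scrB_1 U(\scrG_0)$ and that $\scrB_1$ is cyclic over $\fg$ with generator $e_0$, the same $\scrG_0$-stability reduces this further to checking the commutators with $e_0$.

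The central computational tool I would set up is a short auxiliary identity: for $f\in\scrG_{-1}$, $h\in\scrG_0$, $e\in\scrG_1$, the partial associativity (Proposition~\ref{XzY-prop} and its corollary) together with the multiplication rules (\ref{xyplusminus}) produces
\begin{align*}
[fh, e] = f\dlb h, e\drb + \langle e|f\rangle h,
\end{align*}
the clean form arising because the Clifford-like cancellations $\dlb e,f\drb=\dlb f,e\drb$ and $\langle e|f\rangle+\langle f|e\rangle=0$ for the odd-odd pair $(e,f)$ eliminate the would-be $L$-contribution. Applied with $(f,h,e)=(f_0,h_0+L,e_0)$ and using $\dlb h_0,e_0\drb=B_{00}e_0=0$, $\dlb L, e_0\drb=e_0$, $\langle e_0|f_0\rangle=1$, together with $f_0 e_0=-(h_0+L)$ from (\ref{xyplusminus}), the formula directly yields $[f_0(h_0+L), e_0]=f_0 e_0+(h_0+L)=0$. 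Applied with $(f,h,e)=(f_0, e_\beta, e_0)$, it reduces $[f_0 e_\beta, e_0]=0$ to showing $f_0\dlb e_\beta,e_0\drb=-e_\beta$.

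This last equation I would verify by expanding $f_0\cdot\dlb e_\beta,e_0\drb$ via (\ref{xyplusminus}): the invariance of $\langle-|-\rangle$ together with $\dlb f_0,e_\beta\drb=0$---which holds because the generators $e_i$ for $i\in I$ annihilate $f_0$, hence by iteration so does every $e_\beta$ for $\beta$ a positive root of $\fg$---kills the $L$-term, leaving $f_0\dlb e_\beta,e_0\drb = -\dlb\dlb e_\beta, e_0\drb, f_0\drb$. Super-Jacobi, combined with $\dlb e_0,f_0\drb=h_0$ and $\dlb e_\beta,f_0\drb=0$, simplifies the inner bracket to $\dlb e_\beta, h_0\drb = (\lambda,\beta) e_\beta=e_\beta$, using $\beta(h_0)=-(\lambda,\beta)$ and the hypothesis $(\lambda,\beta)=1$. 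The final clause for $j\in J$ then follows because the pseudo-minuscule hypothesis forces $(\lambda,\alpha_j)=\lambda_j/\epsilon_j\in\{0,1\}$, and this equals $1$ exactly when $j\in J$. The main obstacle in the argument is the careful unwrapping of the non-associative product to arrive at the compact auxiliary identity $[fh,e]=f\dlb h,e\drb+\langle e|f\rangle h$; after that, the Clifford-like cancellation does all the work.
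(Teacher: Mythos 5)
Your auxiliary identity $[fh,e]=f\dlb h,e\drb+\langle e|f\rangle h$ is correct, and your evaluations of $[f_0(h_0+L),e_0]$ and $[f_0e_\beta,e_0]$ reproduce the paper's computations for the root $\alpha_0$. The gap is the reduction to $e_0$ alone. The $\scrG_0$-stability of $N=\{x\in\scrG^\loc{}_{-1}\,:\,[x,\scrG^\loc{}_{1}]=0\}$ says that $x\in N$ implies $[z_0,x]\in N$; it does not allow you to test membership of $x$ in $N$ against the single vector $e_0$, because for $y=[z_0,e_0]$ the Jacobi identity gives $[x,[z_0,e_0]]=[[x,z_0],e_0]+[z_0,[x,e_0]]$, and the first term involves the bracket of $e_0$ with the \emph{different} element $[x,z_0]\in\scrG^\loc{}_{-1}$, which you have not controlled. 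What has to be verified is $[f_0(h_0+L),e_\alpha]=0$ for \emph{every} root vector $e_\alpha\in\scrB_1$; your own identity gives, for $\alpha\neq\alpha_0$,
\begin{align*}
[f_0(h_0+L),e_\alpha]=\big((\alpha_0{}^\crt,\alpha)+1\big)f_0e_\alpha=-\big((\alpha_0{}^\crt,\alpha)+1\big)\dlb f_0,e_\alpha\drb\,,
\end{align*}
and the vanishing of the right-hand side is precisely (\ref{prop3.3}), i.e.\ the point where the pseudo-minuscule hypothesis enters. The omission is not cosmetic: apart from the final clause, your argument nowhere uses that $\lambda^\tri$ is pseudo-minuscule, so if it were complete it would establish the proposition for arbitrary $\lambda$ --- contradicting Section~\ref{sec5}, where the failure of exactly this proposition for $(E_9,\Lambda_1)$ (in particular the impossibility of concluding $f_0e_1=0$ there) is discussed.

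Once $f_0(h_0+L)=0$ is established in $\scrB^W$ by the full check against $\scrB_1$, the second relation is best extracted as in the paper, $f_0e_\beta=-f_0[h_0+L,e_\beta]=-[f_0(h_0+L),e_\beta]=0$ (using $\dlb f_0,e_\beta\drb=0$), i.e.\ as an element of the $\scrG_0$-stable peripheral ideal already produced. Your independent verification that $f_0e_\beta$ annihilates $e_0$ is correct as a computation but suffers from the same incompleteness: showing directly that $f_0e_\beta\in N$ would require controlling $f_0\dlb e_\beta,e_\alpha\drb$ for all $e_\alpha\in\scrB_1$, which again brings in (\ref{prop3.3}).
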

\begin{proof}
Let $e_\alpha$ be a root vector corresponding to a root $\alpha$ of $\scr B$ such that $e_\alpha \in \scr B_1$ but $\alpha \neq \alpha_0$.
We then have
\begin{align}
[e_0,f_0 h_0]+[e_0,f_0 L]&= [e_0,f_0]h_0-f_0[e_0,h_0]+[e_0,f_0]L-f_0[e_0,L]\nn\\
&=h_0+L+f_0e_0=h_0+L-h_0-L=0\,,
\end{align}
and, using (\ref{prop3.3}), also
\begin{align}
[e_\alpha,f_0h_0]+[e_\alpha,f_0L]&=[e_\alpha,f_0]h_0-f_0[e_\alpha,h_0]+[e_\alpha,f_0]L-f_0[e_\alpha,L]\nn\\&=f_0[h_0,e_\alpha]+f_0[L,e_\alpha]
=\big((\alpha_0{}^\crt,\alpha)+1\big)f_0e_\alpha\nn\\&=-\big((\alpha_0{}^\crt,\alpha)+1\big)\dlb f_0,e_\alpha\drb=0\,.
\end{align}
Thus $f_0(h_0+L)$ generates a peripheral ideal of the subalgebra 
of $\scrB^\Loc$ generated by $\scrB_{-1}\scrB_0$ and $\scrB_1$,
and thus
$f_0(h_0+L)=0$ in $\scr B^{W}$. 
The second relation in (\ref{relationsinhatW3}) then follows from the first one by
\begin{align}
f_0e_\beta=-f_0[h_0+L,e_\beta]=-[f_0(h_0+L),e_\beta]=0\,.
\end{align}
\end{proof}
\noindent
When also $\lambda$ is pseudo-minuscule, which means that $\lambda=\lambda^\tri$, we can say more.

\begin{prop} \label{jprop}
If $\lambda^\tri$ is pseudo-minuscule and $\lambda=\lambda^\tri$, then
\begin{align}
f_0f_j-[f_0,f_j]h_j=0 
\end{align}
in $\scrB^{W}$ for any $j\in J$. 
\end{prop}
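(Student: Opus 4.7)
\textit{Plan.} The strategy is to derive the relation by applying $\ad(f_j)$ twice to the relation $f_0 e_j = 0$ supplied by Proposition~\ref{sistalemmat}, using the graded Leibniz rule in $\scr B^\glob$; the restricted associativity from the Corollary after Proposition~\ref{XzY-prop} will be just enough to justify every expansion.

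First I check that the hypotheses really place us in the situation of Proposition~\ref{sistalemmat}. The assumption $\lambda=\lambda^\tri$ forces $\epsilon_j=1$ for every $j\in J$, so that $(\lambda,\alpha_j)=\lambda_j$ for those $j$. Combined with $\lambda^\tri$ being pseudo-minuscule, which says $(\lambda,\alpha)\in\{0,1\}$ for every root $\alpha$, this gives $\lambda_j=1$ for all $j\in J$. Hence $\alpha_j$ is a positive root with $(\lambda,\alpha_j)=1$, so Proposition~\ref{sistalemmat} applies and yields $f_0 e_j=0$ in $\scr B^W$. Moreover $1-B_{j0}=1+\lambda_j=2$, so the Serre relation in $\scr B$ reads $(\ad f_j)^2(f_0)=0$, which after super-antisymmetry gives the identity $[f_j,[f_0,f_j]]=0$ that I will need.

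Next I bracket with $f_j$. The element $f_j$ lies in the subalgebra $\scr G^{\text{sub}}$ of $\scr B^{\Loc}$ (one checks, for instance via the computation of $[e_0,f_0 f_j]$ analogous to the calculations in Example~\ref{W-example}), so the adjoint action of $f_j$ preserves the peripheral ideal; hence $[f_j,f_0 e_j]=0$ in $\scr B^W$. Applying the graded Leibniz rule in $\scr B^\glob$ (all three factors sit in $\scr G^\glob{}_{0-}$, so associativity is available),
\begin{align*}
[f_j,f_0 e_j]=[f_j,f_0]e_j+f_0[f_j,e_j]=-[f_0,f_j]e_j-f_0 h_j,
\end{align*}
and therefore $f_0 h_j+[f_0,f_j]e_j=0$ in $\scr B^W$. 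Bracketing once more with $f_j$ and using $[f_j,h_j]=2f_j$, $[f_j,e_j]=-h_j$, together with the Serre identity $[f_j,[f_0,f_j]]=0$ derived above, I obtain
\begin{align*}
[f_j,f_0 h_j]&=-[f_0,f_j]h_j+2f_0 f_j,\\
[f_j,[f_0,f_j]e_j]&=-[f_0,f_j]h_j.
\end{align*}
Adding these two yields $2\bigl(f_0 f_j-[f_0,f_j]h_j\bigr)=0$ in $\scr B^W$, and the claim follows upon dividing by $2$.

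The main technical obstacle is bookkeeping: making sure that every Leibniz expansion and every rearrangement of a triple product is justified by the restricted associativity result, and that the $\mathbb{Z}_2$-signs (which are trivial here because $f_j$, $h_j$, and $e_j$ are all even) do not flip the cancellation. Once these routine checks are made, the argument is simply ``apply $\ad(f_j)$ twice to $f_0 e_j=0$ and read off $f_0 f_j-[f_0,f_j]h_j$ from the resulting double commutator.''
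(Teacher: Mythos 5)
Your proof is correct, and it takes a genuinely different route from the paper's. The paper establishes the proposition by directly verifying that $[f_0f_j-[f_0,f_j]h_j,e_\alpha]=0$ for every root vector $e_\alpha\in\scrB_1$, splitting into five cases according to the value of $(\lambda,\alpha)$ (with $\alpha=\alpha_0$ and $\alpha=\alpha_0+\alpha_j$ treated separately, the latter computation being omitted as ``lengthy''). You instead deduce the relation from $f_0e_j=0$ (Proposition~\ref{sistalemmat}) by applying $\ad f_j$ twice. The one step that needs care --- that ``being zero in $\scrB^W$'' is stable under $\ad f_j$ --- is sound: $f_j=[e_0,f_0f_j]$ does lie in the subalgebra generated by $\scrB_{-1}\scrB_0$ and $\scrB_1$, so the maximal peripheral ideal is $\ad f_j$-stable; equivalently, if $[x,\scrB_1]=0$ then $[[f_j,x],\scrB_1]=0$ by the Jacobi identity, which is precisely the certificate the paper checks by hand. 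Your deduction of $\lambda_j=1$ from the hypotheses (hence of the Serre relation $(\ad f_j)^2(f_0)=0$) is correct, all Leibniz expansions take place inside the associative algebra $\scrG^\glob{}_{0-}$ so restricted associativity is not even at issue, and both the intermediate identity $f_0h_j=-[f_0,f_j]e_j$ and the final $2\big(f_0f_j-[f_0,f_j]h_j\big)=0$ check out, including signs. What your argument buys is economy: it eliminates the entire case analysis, including the case the paper leaves to the reader. What the paper's argument buys is that it never needs the observation that $f_j$ belongs to the relevant subalgebra, and it displays explicitly how the element annihilates each root space of $\scrB_1$.
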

\begin{proof}
We will show that $[f_0f_j,e_\alpha]-[[f_0,f_j]h_j,e_\alpha]=0$ for any $e_\alpha \in \scr B_1$, divided into five cases:
\begin{enumerate}
\item[(i)] $\alpha=\alpha_0$,
\item[(ii)] $\alpha=\alpha_0+\alpha_j$,
\item[(iii)] $(\lambda,\alpha)=1$ but $\alpha\neq\alpha_0+\alpha_j$, which implies
\begin{align}
[ f_j,e_\alpha ]=
\dlb f_j,f_j,f_0,e_\alpha \drb= 
\dlb e_j,f_0,e_\alpha \drb=0 \label{fall3relation}
\end{align}
\item[(iv)] $(\lambda,\alpha)=2$, which implies
\begin{align}
\dlb f_0,e_\alpha \drb=\dlb f_j,f_0,f_j,e_j,e_\alpha\drb=[ f_j,f_j,e_\alpha]=0\,, \label{fall4relation}
\end{align}
\item[(v)] $(\lambda,\alpha)\geq3$.
\end{enumerate}
The relations (\ref{fall3relation}) and (\ref{fall4relation})
will be needed in a moment and follow easily from the fact that $\lambda^\tri$ is pseudo-minuscule.
In cases (i) and (ii), the proof consists of straightforward calculations: a short one,
\begin{align}
[f_0f_j,e_0]-[[f_0,f_j]h_j,e_0]&=f_j+[f_0,f_j]e_0 \nn\\
&=f_j-\dlb [f_0,f_j],e_0 \drb
=f_j-[h_0,f_j]=0\,,
\end{align}
in case (i) and a more lengthy one in case (ii), which we omit.

In cases (iii)--(v), we have $[f_0,e_\alpha]=[[f_j,f_0],e_\alpha]=0$
and then
\begin{align}
[f_0f_j,e_\alpha]-[[f_0,f_j]h_j,e_\alpha]&=f_0[f_j,e_\alpha]-[f_j,f_0][h_j,e_\alpha]\nn\\
&=-\dlb f_0,[f_j,e_\alpha] \drb + (\alpha_j{}^\crt,\alpha)\dlb [f_j,f_0],e_\alpha \drb\nn\\
&=\big((\alpha_j{}^\crt,\alpha)-1\big) \dlb f_0,[f_j,e_\alpha] \drb+ (\alpha_j{}^\crt,\alpha)[ f_j,\dlb f_0,e_\alpha \drb ]\,.
\end{align}
In case (v) both terms on the last line vanish. In each of the cases (iii) and (iv), one of the terms vanishes, and we have to show that
the other one vanishes too.
In case (iii) we have
\begin{align}
0&=\dlb e_j, f_j,f_j,f_0,e_\alpha \drb \nn\\
&=\dlb h_j,f_j,f_0,e_\alpha \drb + \dlb f_j,h_j,f_0,e_\alpha \drb + \dlb f_j,f_j,e_j,f_0,e_\alpha \drb\nn\\
&=2(\alpha_j{}^\crt,\alpha)\dlb f_j,f_0,e_\alpha \drb\,
\end{align}
and in case (iv) we have
\begin{align}
0&=\dlb e_j,f_0,f_j,f_j,e_\alpha\drb\nn\\
&=\dlb f_0,h_j,f_j,e_\alpha\drb+\dlb f_0,f_j,h_j,e_\alpha\drb+\dlb f_0,f_j,f_j,e_j,e_\alpha\drb\nn\\
&=\big(-2 +2(\alpha_j{}^\crt,\alpha)  \big)\dlb f_0,f_j,e_\alpha\drb+2\dlb f_j,f_0,f_j,e_j,e_\alpha\drb-\dlb f_j,f_j,f_0,e_j,e_\alpha\drb\nn\\
&=2\big((\alpha_j{}^\crt,\alpha)-1   \big)\dlb f_0,f_j,e_\alpha\drb\,,
\end{align}
where we have used the Serre relation $[f_j,f_j,f_0]=0$ which holds since $\lambda_j=\lambda^\tri{}_j=1$.
\end{proof}

\noindent
We are now ready to describe how
$(\scrB^W)_{-1}$ decomposes into $\fg$-modules.

\begin{theorem} \label{minusonestructure}
Let $\lambda^\tri$ be pseudo-minuscule and $\lambda=\lambda^\tri$, and let
$\vartheta$ be a highest root of $\fg^-$. Then the 
submodules $U(\fg)(f_0h_0)$ and $U(\fg)(f_0e_\vartheta)$ of the $\fg$-module $(\scrB^W)_{-1}$
are irreducible with highest weights $\lambda$ and $\vartheta+\lambda$, respectively.
Furthermore, if $\fg$ is simple, then $(\scrB^W)_{-1}$ is the direct sum of all such submodules.
\end{theorem}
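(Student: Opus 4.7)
The plan mirrors the proof of Theorem~\ref{tha-struktur}: first identify the candidate highest-weight vectors, then verify the Serre-type annihilations that make the corresponding cyclic submodules irreducible, and finally argue that these exhaust $(\scrB^W)_{-1}$ when $\fg$ is simple.

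For the first step, the commutator in $\scrB^W$ acts as a derivation on products $f_0 u$ with $u\in U(\scrB_0)$ by the associativity of Proposition~\ref{XzY-prop} and its corollary, so the $\fg$-weights of $f_0 h_0$ and $f_0 e_\vartheta$ are immediately $\lambda$ and $\lambda+\vartheta$ respectively. Annihilation by raising operators follows from Proposition~\ref{sistalemmat}: $[e_i,f_0 h_0]=[e_i,f_0]h_0+f_0[e_i,h_0]=\lambda_i{}^\tri f_0 e_i$, which vanishes for $i\in K$ by $\lambda_i=0$ and for $i\in J$ by $f_0 e_i=0$. Similarly $[e_i,f_0 e_\vartheta]=f_0[e_i,e_\vartheta]$ using $[e_i,f_0]=0$ in $\scrB$, which vanishes for $i\in K$ because $\vartheta$ is a highest root of $\fg^-$, and for $i\in J$ because $[e_i,e_\vartheta]$ is proportional to $e_\beta$ with $\beta=\alpha_i+\vartheta$ satisfying $(\lambda,\beta)=\lambda_i+(\lambda,\vartheta)=1$, so $f_0 e_\beta=0$ again by Proposition~\ref{sistalemmat}.

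For irreducibility I would verify $(\ad f_i)^{1+(\alpha_i{}^\crt,\mu)}(v)=0$ at each highest-weight vector $v$ of weight $\mu$; together with the vanishings above and dominant integrality of $\mu$ this identifies $U(\fg)(v)\cong R(\mu)$. The clean case is $v=f_0 h_0$: Proposition~\ref{sistalemmat} gives $f_0 h_0=-f_0 L$, and since $[L,\fg]=0$ we have $[f_i,f_0 L]=[f_i,f_0]L$, so $(\ad f_i)^{1+\lambda_i}(f_0 L)=\bigl((\ad f_i)^{1+\lambda_i}f_0\bigr)L=0$ by the Serre relation $(\ad f_i)^{1-B_{i0}}(f_0)=0$ in $\scrB$. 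For $v=f_0 e_\vartheta$ I would split into cases according to $i\in J\sqcup K$ and the values $(\alpha_i{}^\crt,\vartheta)\in\{-1,0,1,2\}$, combining the Serre relations in $\scrB$ with Proposition~\ref{sistalemmat} and, when $i\in J$, Proposition~\ref{jprop} used to rewrite $f_0 f_j=[f_0,f_j]h_j$.

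The third step, the direct-sum statement for simple $\fg$, is the main obstacle. Since $L$ commutes with $\fg$, $U(\scrB_0)\cong U(\fg)\otimes\mathbb{K}[L]$, and Proposition~\ref{sistalemmat} gives $f_0 L=-f_0 h_0$, so any element of $\scrB_{-1}U(\scrB_0)$ reduces to a $U(\fg)$-linear combination of expressions $f_0 h_0^m Q$ with $Q\in U(\fg)$. A PBW decomposition of $Q$ through $U(\mathfrak{n}^-)U(\mathfrak{h})U(\mathfrak{n}^+)$, combined with $f_0 e_\beta=0$ for every positive root $\beta$ with $(\lambda,\beta)=1$ (Proposition~\ref{sistalemmat}) and Proposition~\ref{jprop} controlling $f_0 f_j$ for $j\in J$, should reduce every such expression to one lying in $U(\fg)(f_0 h_0)+\sum_\vartheta U(\fg)(f_0 e_\vartheta)$, with $\vartheta$ running over the highest roots of the simple components of $\fg^-$, and distinctness of the dominant weights $\lambda$ and $\lambda+\vartheta$ then makes the sum direct. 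The hard part is the combinatorial bookkeeping: showing that Propositions~\ref{sistalemmat} and \ref{jprop} really suffice to eliminate every other potential highest-weight vector, with particular care when $\fg^-$ contains an $A_1$ factor whose highest root coincides with a simple root of $\fg$. Simplicity of $\fg$ enters by making $\scrB_{-1}\cong R(\lambda)$ irreducible, which anchors the bottom of the weight filtration on which the reduction closes.
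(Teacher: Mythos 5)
Your first step reproduces the paper's argument correctly: $[e_i,f_0h_0]=\lambda^\tri{}_i\,f_0e_i$ and $[e_i,f_0e_\vartheta]=f_0[e_i,e_\vartheta]$, both killed for $i\in K$ trivially and for $i\in J$ by Proposition~\ref{sistalemmat}. The other two steps, however, are only announced, not carried out, and each contains a genuine gap. For irreducibility you propose to verify the lowering Serre relations $(\ad f_i)^{1+(\alpha_i{}^\crt,\mu)}(v)=0$ at each highest-weight vector, as in Theorem~\ref{tha-struktur}; you complete this only for $v=f_0h_0$ and leave $v=f_0e_\vartheta$ as an unexecuted case split. This labour is unnecessary: pseudo-minusculity forces $\fg$ to be finite-dimensional, so $(\scrB^W)_{-1}$ is a quotient of $\scrB_{-1}\scrB_0$, hence of the finite-dimensional space $\scrB_{-1}\otimes\scrB_0$, and a finite-dimensional highest-weight module over semisimple $\fg$ is automatically irreducible. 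That single observation replaces your second step and also gives directness of the sum.

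The third step is where the real gap lies. The degree $-1$ subspace of $\scrB^W$ is spanned by products $x_{-1}y_0$ with $y_0\in\scrB_0$ a \emph{single} Lie algebra element, because the cartanification is built from the subalgebra generated by $\scrB_{-1}\scrB_0$ and $\scrB_1$, not from all of $\scrB_{-1}U(\scrB_0)$. Your reduction of expressions $f_0h_0{}^mQ$ with arbitrary $Q\in U(\fg)$ through a PBW basis therefore attacks a much larger space than is present, and you concede that the resulting ``combinatorial bookkeeping'' is unresolved — so the exhaustion claim is not established. The workable route is to split $\scrB_0$ according to the value of $(\lambda,\alpha)\in\{0,\pm1\}$ on roots: Proposition~\ref{sistalemmat} kills $f_0e_\beta$ for $(\lambda,\beta)=1$ and gives $f_0L=-f_0h_0$; the middle piece $\fg^-$ satisfies $f_0\fg^-=f_0U(\fg^-)(e_\vartheta)\subseteq U(\fg^-)(f_0e_\vartheta)$; Proposition~\ref{jprop} yields $f_0f_j=-[f_0h_j,f_j]$ and hence $f_0\,U(\fg^-)(f_j)\subseteq V$; and one concludes by $\scrB_{-1}\scrB_0=\big(U(\fg)(f_0)\big)\scrB_0\subseteq U(\fg)(f_0\scrB_0)$. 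Note finally that simplicity of $\fg$ does not enter through irreducibility of $\scrB_{-1}$ (which holds for any semisimple $\fg$); it is needed so that $|J|=1$ and the single $h_j$ lies in the span of $L$ and the $h_k$ with $k\in K$, making the $(\lambda,\alpha)=0$ eigenspace exactly $\fg^-\oplus\langle h_0\rangle\oplus\langle L\rangle$.
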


\begin{proof}
We first show that $[e_i,f_0e_{\vartheta}]=[e_i,f_0h_0]=0$ for $i\in I$. We have
\begin{align}
[e_i,f_0e_{\vartheta}]&=f_0[e_i,e_{\vartheta}]\,, & [e_i,f_0h_0]&=f_0[e_i,h_0]\,,
\end{align}
where the right hand sides clearly are zero for $i\in K$. When $i\in J$, the right hand sides are zero by Proposition~\ref{sistalemmat}.
Thus $U(\fg)(f_0h_0)$ and $U(\fg)(f_0e_\vartheta)$ are highest-weight modules
with highest weights $\lambda$ and $\vartheta+\lambda$, respectively, and since
they are finite-dimensional, they are irreducible, and the sum of all of them is direct.
Denote this direct sum by $V$, so that
\begin{align}
V=U(\mathfrak{g})(f_0e_{\vartheta}) \oplus U(\mathfrak{g})(f_0h_0)\,
\end{align}
when $\fg^-$ is simple. In the general case, where $\fg^-$ is not simple but semisimple, $U(\mathfrak{g})(f_0e_{\vartheta})$
is replaced by a direct sum of such terms, one for each simple component of $\fg^-$.

It remains to show that when $\fg$ is simple, any element $x_{-1}y_0\in (\scrB^{W})_{-1}$,
where $x_{-1}\in\scr B_{-1}$ and $y_0\in\scr B_0$, can be written as a sum of elements in $V$.
To this end, we decompose $\scr B_0$ into the direct sum of subspaces
\begin{align}
\scr B_0 = \scr B_{(0,-1)} \oplus \scr B_{(0,0)} \oplus \scr B_{(0,1)}
\end{align}
where the subspaces
$\scr B_{(0,\pm1)}$ are spanned by root vectors $e_\alpha$ with $(\lambda,\alpha)=\pm1$ and
\begin{align}
\scr B_{(0,0)} &= \fg^- \oplus \langle h_0 \rangle \oplus \langle L \rangle\,,
\end{align}
since $\fg$ is simple, so there is only one $h_j$ with $j\in J$, which can be written as a linear combination of $L$ and $h_k$ for $k\in K$.
In Proposition~\ref{sistalemmat} we have already shown that
$f_0\scr B_{(0,1)}=0$, and that $f_0L=-f_0h_0$. Furthermore, each simple component of $\fg^-$ equals $U(\fg^-)(e_{\vartheta})$
where $\vartheta$ is the corresponding highest root and 
\begin{align}
f_0\big( U(\mathfrak{g}^-)(e_{\vartheta})\big)
=U(\mathfrak{g}^-)(f_0e_{\vartheta})\subseteq U(\mathfrak{g})(f_0e_{\vartheta})\,.
\end{align}
We have thus shown that
\begin{align}
f_0\scrB_{(0,0)} \oplus f_0\scrB_{(0,1)} \subseteq V\,.
\end{align}
In order to show that $f_0\scrB_{(0,-1)} \subseteq V$, we first consider
$f_0f_j$ for $j\in J$. From Proposition~\ref{jprop} we know that $f_0f_j=[f_0,f_j]h_j$
and it follows that 
\begin{align}
f_0f_j=2f_0f_j-f_0f_j=-f_0[h_j,f_j]-[f_0,f_j]h_j=-[f_0h_j,f_j]\,.
\end{align}
Since $f_0h_j \in f_0 \scrB_{(0,0)}\subseteq V$, we get $f_0f_j\in V$. Now
\begin{align}
f_0 \scr B_{(0,-1)}=f_0 U(\mathfrak{g}^-)(f_j)=U(\mathfrak{g}^-)(f_0f_j)\subseteq U(\mathfrak{g})(f_0f_j) \subseteq V
\end{align}
and we have shown that $f_0 \scr B_{0} \subseteq V$. 
Since
\begin{align}
\scr B_{-1} \scr B_{0} = \big(U(\mathfrak{g})(f_0)\big)\scr B_{0}\subseteq U(\mathfrak{g})(f_0\scr B_{0})\,,
\end{align}
this then generalises to $\scr B_{-1} \scr B_{0} \subseteq V$.
\end{proof}

\begin{ex} Consider $(\fg,\lambda,\kappa)=(A_4,\Lambda_2,\kappa_0)$. With $K^a{}_b$ and $K$ defined as in Example~\ref{W-example} with $n=5$,
we have
\begin{align}
\dlb E_{ab}, F^{cd} \drb &= -4\de_{[a}{}^{[c} K^{d]}{}_{b]} + 2\de_{[a}{}^c{} \de_{b]}{}^d{} K\,, &
[K^a{}_b,E_{cd}]&=2\de_{[c}{}^a{}E_{d]b}\,, \nn\\
\langle E_{ab}|F^{cd} \rangle&= 2\de_{[a}{}^c{} \de_{b]}{}^d{}\,,  \qquad \quad \  L=-\frac12K\,,  & [K^a{}_b,F^{cd}]&=-2\de_{b}{}^{[c}{}F^{d]a}\,.
\end{align}
(Square brackets denote antisymmetrisation, for example
$2\de_{[c}{}^a{}E_{d]b}=\de_{c}{}^a{}E_{db}-\de_{d}{}^a{}E_{cb}$, and below round brackets denote symmetrisation in the same way.)
By removing node $2$ in the Dynkin diagram of $\fg=A_4$, we get the Dynkin diagram of the non-simple subalgebra $\fg^-=A_1\oplus A_2$.
The highest root of the 
$A_1$ component is $\alpha_1=2\Lambda_1-\Lambda_2$, and the highest root of the 
$A_2$ component is $\alpha_3+\alpha_4=-\Lambda_2+\Lambda_3+\Lambda_4$. According to Theorem~\ref{minusonestructure}, 
the $A_4$-module
$\scrB^W$
then decomposes into a direct sum of three irreducible ones, 
one with highest weight $\lambda=\Lambda_2$ and two with 
highest weights 
obtained by adding $\lambda=\Lambda_2$ to these highest roots of $\fg^-$,
which gives $2\Lambda_1$ and
$\Lambda_3+\Lambda_4$. They are spanned by the linear combinations
\begin{align}
\Lambda_2 &: F^{ab}K\,,\nn\\
2\Lambda_1 &: F^{c(a}K^{b)}{}_c\,,\nn\\
\Lambda_3+\Lambda_4 &: F^{abc}{}_{d}=F^{[ab}K^{c]}{}_{d}-\tfrac13 F^{[ab}K\delta_{d}{}^{c]}-\tfrac23 F^{e[a}K^{b}{}_{e}\delta_{d}{}^{c]}\,,
\end{align}
with summation on repeated indices. A straightforward calculation gives
\begin{align}
[E_{ab},F^{cde}{}_f]=6\de_{a}{}^{[c}\de_{b}{}^{d}K^{e]}{}_f+4\de_{f}{}^{[c}\de_{[a}{}^{d}K^{e]}{}_{b]}-2
\de_{a}{}^{[c}\de_{b}{}^{d}\de_{f}{}^{e]} K\,
\end{align}
and setting $3F^{abc}{}_{d}=\varepsilon^{abcef}F_{ef|d}$, this is equivalent to
\begin{align}
[E_{ab},F_{cd|e}]=\varepsilon_{abcdf}K^f{}_e + \varepsilon_{abcef}K^f{}_d
\end{align}
(where $\varepsilon$ is fully antisymmetric and $\varepsilon^{12345}=\varepsilon_{12345}=1$)
showing that the restricted local cartanification of $\scrB$ with respect to the $A_2$ subalgebra is the local part of
the exceptional linearly compact Lie superalgebra $E(5,10)$ \cite{Kac98,ChengKac,Shchepochkina}. Applying the isomorphism in the next section,
$E(5,10)$ can thus be considered as a ``restricted tensor hierarchy algebra'', 
as was noted in \cite{Cederwall:2021ejp},
with generators and relations
obtained by removing not only $h_0$ and $f_{00}$ but also $f_{01}$
from the ones for the associated ``unrestricted'' tensor hierarchy algebra $W$.
Similarly, the
restricted local cartanification of $\scrB$ with respect to the $A_2$ subalgebra is the local part of
the ``strange'' Lie superalgebra $P(5)$, and this construction can be generalised to $P(n)$ for any $n$.
\qed
\end{ex}

\section{The isomorphism between $W$ and $\scr B^W$} \label{isomorphism-section}

We will now show that when the triple $(\fg,\lambda,\kappa)$ is such that $\fg$ is simple, $\lambda$ is a pseudo-minuscule
weight and $\lambda=\lambda^\tri$, then $\scr B^W$ constructed in the preceding section is isomorphic to
the tensor hierarchy algebra $W$ constructed from $(\fg,\lambda,\kappa)$ by generators and relations in Section~\ref{tha-subsection}.
We begin by showing that there is a homomorphism when $\lambda^\tri$ is pseudo-minuscule, and then that this homomorphism is bijective in the
case where $\fg$ is simple and $\lambda=\lambda^\tri$.

\begin{prop} \label{satsen}
Let $\phi$ be the linear map
from the local part of $W$ to the local part of $\scrB^W$
given by
$f_{0i}\mapsto f_0 h_i$ for $i \in \{0\} \cup K$, leaving the other generators unchanged. Then $\phi$ is a local Lie superalgebra homomorphism.
\end{prop}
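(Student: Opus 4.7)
\emph{Overall strategy.} The plan is to verify that the images under $\phi$ of the defining relations of $W$ that lie in the local part, i.e.\ (\ref{ordchevrel})--(\ref{addrel3}), are valid identities in $\scrB^W$; this then ensures that $\phi$ extends uniquely to a bracket-preserving degree-respecting map between the local parts. Since $\phi$ is the identity on every generator other than the $f_{0i}$, the relations of $W$ that do not involve any $f_{0i}$ are inherited from the embedding $\scrB\hookrightarrow \scrB^W$ and require no further check. What is left is, for each $i\in\{0\}\cup K$ and with $\phi(f_{0i})=f_0 h_i$, the Chevalley-type relations (\ref{eifjf0a}), the additional quadratic relations (\ref{addrel1})--(\ref{addrel3}), and the Serre-type relations $(\ad f_j)^{1-B_{j0}}(f_0 h_i)=0$ for $j\in J$ that are part of (\ref{addrel2}).

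\emph{Chevalley-type and quadratic relations.} For (\ref{eifjf0a}) I would compute the brackets directly in $\scrB^\glob$ using the product formulas (\ref{xyplusminus}) together with the restricted associativity of Proposition~\ref{XzY-prop}. For $i\in\{0\}\cup K$ one has $[h_i,e_0]=B_{i0}e_0=0$, so $h_i$ commutes with $e_0$ in $\scrB^\glob$; then $[e_0,f_0 h_i]=(e_0 f_0+f_0 e_0)h_i=((h_0+L+1)+(-h_0-L))h_i=h_i$, and $[h_i,f_0 h_j]=-B_{i0}f_0 h_j$ follows from $[h_i,f_0]=-B_{i0}f_0$ together with $[h_i,h_j]=0$. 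For the quadratic relations (\ref{addrel1})--(\ref{addrel3}) the idea is to use the Jacobi identity and associativity to transport the $e_j,e_k,f_l$ across the $h_i$ onto $f_0$, reducing the check to identities about $f_0$ in $\scrB^W$. For instance, $[e_j,f_0 h_i]=f_0[e_j,h_i]=-B_{ij}\,f_0 e_j$ vanishes by Proposition~\ref{sistalemmat} since $f_0 e_j=0$ in $\scrB^W$ for $j\in J$. The remaining relations in (\ref{addrel1})--(\ref{addrel3}) reduce in the same manner to identities in $\scrB$ contracted against $h_i$, using $[e_k,f_0]=[h_k,f_0]=0$ for $k\in K$ to freely commute the $K$-generators through.

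\emph{Serre relations: the main obstacle.} Applying the super-Leibniz rule for $\ad f_j$ together with the associativity corollary of Proposition~\ref{XzY-prop} yields the recursion $(\ad f_j)^N(f_0 h_i)=(\ad f_j)^N(f_0)\,h_i+N B_{ij}\,(\ad f_j)^{N-1}(f_0)\,f_j$. At $N=1+\lambda_j$ the first summand is zero by the Serre relation that already holds in $\scrB$, so the whole relation reduces to $(1+\lambda_j)B_{ij}\,e_\gamma f_j=0$ in $\scrB^W$, where $e_\gamma=(\ad f_j)^{\lambda_j}(f_0)\in\scrB_{-1}$ is a specific root vector. Showing that this product lies in the peripheral ideal of $\scrB^\Loc$ that is quotiented out in $\scrB^W$ is the principal technical obstacle. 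The strategy is to pair the element with an arbitrary generator of $\scrB_1$ and use the pseudo-minuscule hypothesis on $\lambda^\tri$, together with Proposition~\ref{sistalemmat}, to show that each resulting bracket vanishes, essentially by reducing the expressions to multiples of $f_0(h_0+L)$ or $f_0 e_\beta$ with $(\lambda,\beta)=1$, both of which are zero in $\scrB^W$.
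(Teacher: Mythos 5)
Your overall strategy --- push $\phi$ through the defining relations (\ref{ordchevrel})--(\ref{addrel3}) and verify them in $\scrB^W$ with $f_{0i}=f_0h_i$ --- is exactly the paper's, and your treatment of (\ref{eifjf0a}) and of the quadratic relations (reducing everything to $f_0e_j=0$ and $f_0(h_0+L)=0$ from Proposition~\ref{sistalemmat}, and to identities like $[e_k,[e_k,h_i]]=0$) matches the paper's computations. Your Leibniz recursion for the Serre relation is also correct: at $N=1-B_{j0}=1+\lambda_j$ everything reduces to showing $(1+\lambda_j)B_{ij}\,(\ad f_j)^{\lambda_j}(f_0)\,f_j=0$ in $\scrB^W$.

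However, at precisely that point you stop: you name the claim ``the principal technical obstacle'' and only outline a plan (pair $e_\gamma f_j$ against all of $\scrB_1$ and hope each bracket reduces to $f_0(h_0+L)$ or $f_0e_\beta$). That plan is not carried out, and as stated it would require a case analysis over the roots $\alpha$ with $e_\alpha\in\scrB_1$ comparable to the one in Proposition~\ref{jprop} --- which is proved only under the \emph{stronger} hypothesis $\lambda=\lambda^\tri$, not available here. So the key step is genuinely missing. The paper closes it with a short trick that avoids any pairing against $\scrB_1$: since $j\in J$ one has $B_{0j}\neq0$ and $[f_j,h_0+L]=B_{0j}f_j$, so
\begin{align*}
(1-B_{j0})\,(\ad f_j)^{-B_{j0}}(f_0)\,f_j
=\tfrac{1}{B_{0j}}\Big((\ad f_j)^{1-B_{j0}}\big(f_0(h_0+L)\big)-(\ad f_j)^{1-B_{j0}}(f_0)\,(h_0+L)\Big)\,,
\end{align*}
where the second term vanishes by the Serre relation already holding in $\scrB_{-1}$ and the first vanishes because $f_0(h_0+L)=0$ in $\scrB^W$ and $\ad f_j$ descends to the quotient. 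In other words, your ``obstacle'' term is recognised as the surviving Leibniz term of $(\ad f_j)^{1+\lambda_j}$ applied to an element already known to be zero. Without this (or a completed version of your pairing argument), the proof of the Serre-type relations in (\ref{addrel2}), and hence of the proposition, is incomplete.
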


\begin{proof}
We will show that the relations (\ref{ordchevrel})--(\ref{addrel3}) are satisfied in $\scrB^{W}$ with $f_{0i}=f_0 h_i$.
We will first
show that $[f_0h,e_j]=[f_j,[f_j,f_0h]]=0$ for any $h \in \mathfrak{h}$ and $j\in J$.
From Proposition~\ref{sistalemmat} we know that $f_0(h_0+L)=f_0e_j=0$ in $\scrB^W$.
We get
\begin{align}
[f_0h,e_j]&=f_0[h,e_j]=\alpha_j(h)f_0e_j=0\,
\end{align}
for any $h\in\mathfrak{h}$. Similarly, since $({\rm ad}\,f_j)$ acts on $f_{0i}=f_0h_i$ by the Leibniz rule,
and since
\begin{align}
({\rm ad}\,f_j)^{1-B_{j0}}(f_{0})=({\rm ad}\,f_j)^{2}(h_i)=({\rm ad}\,f_j)^{2}(h_0+L)=0\,,
\end{align}
we have
\begin{align}
({\rm ad}\,f_j)^{1-B_{j0}}(f_{0}h_i)&=({1-B_{j0}})({\rm ad}\,f_j)^{-B_{j0}}(f_{0})[f_j,h_i]\nn\\
&=B_{ij}({1-B_{j0}})({\rm ad}\,f_j)^{-B_{j0}}(f_{0})f_j\nn\\
&=\frac{B_{ij}}{B_{0j}}({1-B_{j0}})({\rm ad}\,f_j)^{-B_{j0}}(f_{0})[f_j,h_0+L]\nn\\
&=\frac{B_{ij}}{B_{0j}}({\rm ad}\,f_j)^{1-B_{j0}}\big(f_0(h_0+L)\big)=0
\end{align}
where the last equality follows from Proposition~\ref{sistalemmat}.
We have thus shown the relations (\ref{addrel2}).
The relations (\ref{eifjf0a}) and (\ref{addrel1})  
are straightforward to show, 
\begin{align}
[e_0,f_{0i}]&=[e_0,f_{0}h_{i}]=[e_0,f_0]h_i-f_0[e_0,h_i]=h_i\,,\nn\\
[h_i,f_{0j}]&=[h_i,f_{0}h_{j}]=[h_i,f_{0}]h_j+f_0[h_i,h_j]=-B_{i0}f_0h_j=-B_{i0}f_{0j}\,,\nn\\
[e_k,[f_l,f_{0i}]]&=[e_k,[f_l,f_{0}h_{i}]]
=f_0[e_k,[f_l,h_i]]=\de_{kl}B_{il}f_0h_k=\de_{kl}B_{il}f_{0k}\,.
\end{align}
The relations (\ref{addrel4}) and (\ref{addrel3}) easily follow from $[e_k,[e_k,h_i]]=[f_k,[f_k,h_i]]=0$ and $f_0[e_j,[e_k,h_i]]=0$, respectively,
where the last equality in turn follows from $f_0e_\beta=0$ in Proposition~\ref{sistalemmat}.
The relations not involving $f_{0i}$ are automatically satisfied.
\end{proof}

\begin{theorem} \label{maintheorem}
If $\fg$ is simple, $\lambda$ is a pseudo-minuscule
weight and $\lambda=\lambda^\tri$, then $W$ and $\scrB^W$ are isomorphic.
\end{theorem}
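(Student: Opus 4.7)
The plan is to take the local Lie superalgebra homomorphism $\phi$ from Proposition~\ref{satsen} and promote it to a global isomorphism $\phi_W : W \to \scrB^W$. The work splits into verifying that $\phi$ is a bijection on the local parts (the only nontrivial degree being $-1$) and then invoking standard minimality to extend the local isomorphism globally.

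Bijectivity on the local parts is immediate in degrees $0$ and $1$, since $\phi$ is the identity on the generators $e_i, f_i, h_i, e_0, h_0$ that span these subspaces. In degree $-1$, the hypotheses of Theorem~\ref{maintheorem} imply those of both Theorem~\ref{tha-struktur} and Theorem~\ref{minusonestructure}, so $W_{-1}$ and $(\scrB^W)_{-1}$ decompose into matching direct sums of irreducible $\fg$-modules: one copy of the irreducible of highest weight $\lambda$, together with one copy of the irreducible of highest weight $\vartheta + \lambda$ for each highest root $\vartheta$ of each simple component of $\fg^-$. Since $\phi$ is $\fg$-equivariant, Schur's lemma reduces bijectivity on $W_{-1}$ to showing that $\phi$ is nonzero on each highest-weight vector.

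The key computation is on the highest-weight vectors. One has $\phi(f_{00}) = f_0 h_0$, which is the highest-weight vector of the $\lambda$-component in $(\scrB^W)_{-1}$. For each $(\vartheta + \lambda)$-component,
\begin{align*}
\phi([e_\vartheta, f_{0\mu^\crt}]) = [e_\vartheta, f_0 h_{\mu^\crt}] = [e_\vartheta, f_0]\, h_{\mu^\crt} - (\mu, \vartheta)\, f_0 e_\vartheta.
\end{align*}
The crucial observation, and the place where the hypotheses really bite, is that $[e_\vartheta, f_0] = 0$ in $\scrB$ and hence in $\scrB^W$: the element $f_0$ is a highest-weight vector of the irreducible $\fg$-module $\scrB_{-1}$ of highest weight $\lambda$, while $[e_\vartheta, f_0]$ would carry weight $\lambda + \vartheta$, which is strictly larger than $\lambda$ and so is not a weight of $\scrB_{-1}$. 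Hence $\phi([e_\vartheta, f_{0\mu^\crt}]) = -(\mu, \vartheta)\, f_0 e_\vartheta$, a nonzero multiple of the highest-weight vector of the $(\vartheta + \lambda)$-component as soon as $\mu$ is chosen with $(\mu, \vartheta) \neq 0$ (for instance $\mu = \vartheta$).

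With $\phi$ a local isomorphism in hand, the global extension proceeds as follows. Proposition~\ref{satsen} already shows that $\phi$ satisfies all defining relations of $\tilde W$, so it extends to a $\mathbb{Z}$-graded Lie superalgebra homomorphism $\tilde\phi : \tilde W \to \scrB^W$; the kernel of $\tilde\phi$ is a graded ideal intersecting the local part trivially and hence contained in the maximal such ideal of $\tilde W$, so $\tilde\phi$ descends to $\phi_W : W \to \scrB^W$. Surjectivity of $\phi_W$ follows because $\scrB^W$ is generated by its local part, which lies in the image. Injectivity follows from the defining property of $W$: its kernel is a graded ideal of $W$ intersecting the local part trivially, and $W$ admits no such nontrivial ideal. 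I expect the main obstacle to be the degree $-1$ step, which rests on the weight-space argument $[e_\vartheta, f_0] = 0$ together with the matching $\fg$-module decompositions from Theorems~\ref{tha-struktur} and \ref{minusonestructure}; once these are combined with Schur's lemma, everything else is essentially mechanical.
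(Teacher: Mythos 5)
Your overall strategy is the paper's: show that $\phi$ from Proposition~\ref{satsen} is a bijection on local parts and then invoke the standard fact that an isomorphism of local parts extends to the minimal $\mathbb{Z}$-graded Lie superalgebras. Your key computation is also the paper's: $[e_\vartheta,f_0]=0$ (since $f_0$ is a highest-weight vector of $\scrB_{-1}$) gives $\phi([e_\vartheta,f_{0\mu^\crt}])=-(\mu,\vartheta)\,f_0e_\vartheta$, which is exactly how the paper establishes surjectivity onto the highest-weight vectors supplied by Theorem~\ref{minusonestructure}. Where you differ is injectivity: the paper constructs an explicit left inverse $\sigma$ recursively by $\sigma(f_0e_\vartheta)=e_\vartheta{}^\sh$ and $\sigma[x,y]=[x,\sigma(y)]$, whereas you match the two decompositions and apply Schur's lemma; since the constituents (highest weights $\lambda$ and $\vartheta+\lambda$ for the distinct highest roots $\vartheta$ of the simple components of $\fg^-$) are pairwise non-isomorphic, this works and is arguably cleaner. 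Both routes, however, stand or fall with Theorem~\ref{tha-struktur}, and that is where you have left a hole.

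You assert that the hypotheses of Theorem~\ref{maintheorem} imply those of Theorem~\ref{tha-struktur} without proof, and this is not automatic: one must check $(\alpha_i{}^\crt,\vartheta)\geq-1$ for all $i\in I$. For $i\in K$ this is clear because $\vartheta$ is a highest root of $\fg^-$, but for $j\in J$ it requires an argument, which the paper supplies inside the proof of Theorem~\ref{maintheorem}: pseudo-minusculity forces $[e_j,[e_j,e_\alpha]]=0$ for any root $\alpha$ of $\fg^-$ (otherwise $\alpha+2\alpha_j$ would be a root with $(\alpha_0{}^\crt,\alpha+2\alpha_j)=-2$), and then
\begin{align*}
0=[f_j,f_j,e_j,e_j,e_\alpha]=2\big(1+(\alpha_j{}^\crt,\alpha)\big)(\alpha_j{}^\crt,\alpha)e_\alpha
\end{align*}
shows $(\alpha_j{}^\crt,\alpha)\in\{0,-1\}$. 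Without this step you cannot invoke the decomposition of $W_{-1}$, and your Schur's-lemma comparison has nothing to compare against. A smaller quibble of the same kind: bijectivity in degrees $0$ and $1$ is not quite ``immediate'' from $\phi$ fixing the generators, since a priori $W_0$ and $W_1$ could exceed $\scrB_0$ and $\scrB_1$; it rests on $W_0=\scrB_0$, established at the end of the proof of Theorem~\ref{tha-struktur}. Apart from these omissions the proposal is sound and follows the paper's line of argument.
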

\begin{proof}
We will show that the local Lie superalgebra homomorphism $\phi$ is bijective.

The surjectivity of $\phi$ follows from Theorem~\ref{minusonestructure} together with the facts that $f_0h_0=\phi(f_{00})$ and
\begin{align}
f_0e_{\vartheta}=\frac1{(\mu^\crt,\vartheta)}f_0[h_\mu,e_{\vartheta}]=
\frac1{(\mu^\crt,\vartheta)}[f_0h_\mu,e_{\vartheta}]=
\frac1{(\mu^\crt,\vartheta)}[\phi(f_{0\mu}),e_{\vartheta}]=-\phi(e_{\vartheta}{}^\sh)\,,
\end{align}
for some weight $\mu$ of $\fg^-$ such that $(\mu^\crt,\vartheta)\neq0$ (for example $\mu=\vartheta$).

For the injectivity, we have just seen that $f_0e_{\vartheta}=-\phi(e_{\vartheta}{}^\sh)$. We can then define a linear map
$\sigma:\scrB^W \to W$ recursively by $\sigma(f_0e_{\vartheta})=e_{\vartheta}{}^\sh$ and
\begin{align}
\sigma[x,y]=[x,\sigma(y)]
\end{align}
for any $x \in \mathfrak{g}$ and it follows by induction that $\sigma(\phi(x))=x$ for any $x \in W$.
However, we need to ensure that $\sigma$ is well defined,
and for this 
it suffices to show that 
$J$ must be such that $(\alpha_j{}^\crt,\alpha)\geq-1$
for any $j\in J$, so that
the conditions in Theorem~\ref{tha-struktur}
is satisfied.
Since $\lambda^\tri$ is a pseudo-minuscule weight, we have $[e_j,[e_j,e_\alpha]]=0$ (otherwise $\alpha+2\alpha_j$ would be a
root with $(\alpha_0{}^\crt,\alpha+2\alpha_j)=-2$). Then
\begin{align}
0&=[f_j,f_j,e_j,e_j,e_\alpha]=2\big(1+(\alpha_j{}^\crt,\alpha)\big)(\alpha_j{}^\crt,\alpha)e_\alpha 
\end{align}
so that indeed either $(\alpha_j{}^\crt,\alpha)=0$ or $(\alpha_j{}^\crt,\alpha)=-1$.

Finally, the isomorphism between the local parts induces an isomorphism between the
corresponding minimal $\mathbb{Z}$-graded Lie superalgebras \cite{Kac68,Kac77B}.
\end{proof}

\section{Conclusions}\label{sec5}

We have seen that the map $\phi$ is an isomorphism 
when $\fg$ is simple and finite-dimensional, $\lambda$ is a pseudo-minuscule
weight and $\lambda=\lambda^\tri$.

What happens when $\fg$ is infinite-dimensional? The procedure 
of the cartanification still makes sense and gives a non-trivial algebra, but it seems quite different from
the corresponding tensor hierarchy algebra $W$.
Consider $(\fg,\lambda,\kappa)=(E_9,\Lambda_1,\kappa_0)$
as an example. Then the Cartan matrix $A$ is not invertible, but we can still extend it to a Cartan matrix
$B$ of $\scrB$, from which we can in turn construct a Lie superalgebra $W$ following the same steps  
as when $A$ is invertible, described in the beginning of Section~\ref{tha-subsection}.
At degree $0$ it contains an element
\begin{align}
{\sf J}=[e_0,e_1,e_\vartheta,f_{02}]
\end{align}
where $\vartheta$ is the highest root of the $E_8$ subalgebra.
As shown in \cite[Section 5]{Cederwall:2021xqi}, we have
$[{\sf J},e_1]\in E_9{}'$ and $[{\sf J},f_1]\in E_9{}'$,
but ${\sf J}\notin E_9{}'$ (where $E_9{}'$ is the derived algebra of $E_9$,
in other words,
the centrally extended loop algebra of $E_8$). More precisely, ${\sf J}$ acts as the Virasoro generator $L_1$,
which is not part of $E_9{}'$ but can be added to $E_9{}'$ equally well as the derivation identified with $L_0$
in the standard definition of the affine Kac--Moody algebra $E_9$. However, in the cartanification, the corresponding element
\begin{align}
[e_0,e_1,e_\vartheta,f_0h_2]
\end{align}
is zero. Indeed, $[e_\vartheta,f_0h_2]=f_0[e_\vartheta,h_2]=-f_0e_\vartheta$ and
\begin{align}
[e_0,e_1,f_0e_\vartheta]&=[e_0,f_0[e_1,e_\vartheta]]\nn\\
&=[e_0,f_0][e_1,e_\vartheta]-f_0[e_0,e_1,e_\vartheta]\nn\\
&=[e_1,e_\vartheta]+\dlb f_0,[e_0,e_1,e_\vartheta]\drb\nn\\
&=[e_1,e_\vartheta]+[h_0,e_1,e_\vartheta]=0\,. \label{e9calc1}
\end{align}

How can the cartanification be generalised in order to cover the infinite-dimensional cases? Already the proof of
Proposition~\ref{sistalemmat} fails, and in particular we cannot conclude that $f_0e_j=0$ for any $j\in J$.
This is essential since we always have $[f_{0k},e_j]=0$ in $W$, and $[f_0h_k,e_j]=B_{kj}f_0e_j$.
Even if we manage to get the relation $f_0e_j=0$ (which means $f_0e_1=0$ in the $E_9$ example above) in some other way,
the problem of the vanishing element
$[e_0,e_1,e_\vartheta,f_0h_2]$ in the $E_9$ example remains. In some sense, it even aggravates, since then already
\begin{align}
[e_1,e_\vartheta,f_0h_2]&=e_1(f_0e_\vartheta)-(f_0e_\vartheta) e_1\nn\\
&=(e_1f_0)e_\vartheta-(f_0e_1) e_\vartheta=0\,, \label{e9calc2}
\end{align}
implying that the whole highest weight module at degree $-1$ with this highest weight vector vanishes, and the attempt to reproduce
the tensor hierarchy algebra $W$ would only result in the Borcherds--Kac--Moody superalgebra $\scr B$. The solution is probably
that associativity needs to be restricted already at this level, so that the naive calculations (\ref{e9calc1}) and (\ref{e9calc2}) do not hold.

In order to overcome the obstacles with infinite-dimensional $\fg$,
it is probably a good strategy to first study the finite-dimensional cases where the conditions are not satisfied,
which in the series $(\fg,\lambda,\kappa)=(E_r,\Lambda_1,\kappa_0)$
means $r=8$. Another way of obtaining the corresponding tensor hierarchy algebras
in this (and similar) series by some kind of modified cartanification would be to consider the non-consistent $\mathbb{Z}$-grading
associated to node $d$, where $r-6\leq d\leq r-3$. The local part would then be a restricted cartanification of the 
contragredient local Lie superalgebra associated to $(\fg,\lambda,\kappa)=(E_{d+1},\Lambda_1,\kappa_0)$
tensored with the Clifford algebra on $2d$ generators $F^a$ and $E_a$ where $a=1,2,\ldots,d$ \cite{CederwallPalmkvistGravityLine}.
For more general decompositions, the Clifford algebra could possibly be replaced by some non-associative generalisation. As
noted in Remark~\ref{cliffrmk}, it seems that the Clifford algebra can be obtained by factoring out the ideal of 
$\scrB^\glob$ generated by $f_0f_0$ and $e_0e_0$ in the case $(\fg,\lambda,\kappa)=(A_{d-1},\Lambda_1,\kappa_0)$.
Do $f_0f_0$ and $e_0e_0$ generate a proper ideal also in the general case?

Finally, a natural direction for further research would be to compare the cartanification not only 
to the construction by generators and relations, referred to as (iii) in the introduction (Section~\ref{intro}),
but also to (i) and (ii). 
In some cases where $\lambda^\tri$ is pseudo-minuscule
but $\lambda$ is not, preliminary studies indicate a
disagreement already between (i) and (iii),
where the cartanification agrees with (i) but not with (iii).
It would be interesting to see whether this holds generally in such cases.

\bibliographystyle{utphysmod2}

\providecommand{\href}[2]{#2}

%\begin{FlushLeft}

%\end{FlushLeft}

\end{document}